\documentclass[epsfig,11pt]{llncs}
\usepackage{graphicx,latexsym,amsfonts}
\setlength{\textwidth}{15.25cm}
\setlength{\textheight}{23cm}
\setlength{\oddsidemargin}{6pt}
\setlength{\evensidemargin}{6pt}
\setlength{\topmargin}{-0.05in}

\newcommand{\remove}[1]{}
\usepackage{amsfonts}

\newcommand{\bbV}{{\mathcal V}}
\newcommand{\bbI}{{\mathcal I}}

\newcommand{\bbH}{{\mathcal H}}
\usepackage{epsfig,latexsym,graphicx}
\newtheorem{observation}{Observation}
\begin{document}

\title{Holes or Empty Pseudo-Triangles in Planar Point Sets}

%% use optional labels to link authors explicitly to addresses:
\author{Bhaswar B. Bhattacharya$^1$\and Sandip Das$^2$}

\institute{
$^1$ \small{Department of Statistics, Stanford University, California, USA,\\
{\tt bhaswar.bhattacharya@gmail.com}}\\
$^2$ \small{Advanced Computing and Microelectronics Unit, Indian Statistical Institute,\\
Kolkata, India, {\tt sandipdas@isical.ac.in}}}

\maketitle

\begin{abstract}
Let $E(k, \ell)$ denote the smallest integer such that any set of at least $E(k, \ell)$ points in the plane, no three on a line, contains either an empty convex polygon with $k$ vertices or an empty pseudo-triangle with $\ell$ vertices.
The existence of $E(k, \ell)$ for positive integers $k, \ell\geq 3$, is the consequence of a result proved by Valtr [{\it Discrete and Computational Geometry}, Vol. 37, 565--576, 2007]. In this paper, following a series of new results about the existence of empty pseudo-triangles in point sets with triangular convex hulls, we determine the exact values of $E(k, 5)$ and $E(5, \ell)$, and prove bounds on $E(k, 6)$ and $E(6, \ell)$, for $k, \ell\geq 3$.
%In particular, we show that $E(5)=7$ and $12\leq E(6)\leq 18$.
By dropping the emptiness condition, we define another related quantity $F(k, \ell)$, which is the smallest integer such that any set of at least $F(k, \ell)$ points in the plane, no three on a line, contains a convex polygon with $k$ vertices or a pseudo-triangle with $\ell$ vertices. Extending a result of Bisztriczky and T\'oth [{\it Discrete Geometry,
%In Honor of W. Kuperberg's 60th Birthday,
Marcel Dekker}, 49--58, 2003], we obtain the exact values of $F(k, 5)$ and $F(k, 6)$, and obtain
non-trivial bounds on $F(k, 7)$.\\

\noindent\textbf{Keywords.} Convex hull, Discrete geometry, Empty convex polygons,
Erd\H os-Szekeres theorem, Pseudo-triangles, Ramsey-type results.
\end{abstract}

\section{Introduction}
\label{sec:intro}

The famous Erd\H os-Szekeres theorem \cite{erdos} states that for every positive integer {\it m}, there exists a smallest integer $ES(m)$, such that any set of at least $ES(m)$ points in the plane, no three on a line, contains $m$ points which lie on
the vertices of a convex polygon. Evaluating the exact value of $ES(m)$
is a long standing open problem. A construction due to Erd\H os
\cite{erdosz} shows that $ES(m)\geq 2^{m-2} +1$, which is
conjectured to be sharp. It is known that $ES(4)=5$ and $ES(5)=9$
\cite{kalb}. Following a long computer search, Szekeres and Peters
\cite{szekeres} recently proved that $ES(6)=17$. The value of
$ES(m)$ is unknown for all $m> 6$. The best known upper bound for
$m\geq 7$ is due to T\'oth and Valtr \cite{tothvaltr}: $ES(m)
\leq {{2m-5}\choose {m-3}}+1$.

In 1978 Erd\H os \cite{erdosempty} asked whether for every positive integer $k$,
there exists a smallest integer $H(k)$, such that any set of at least
$H(k)$ points in the plane, no three on a line, contains $k$ points which lie on the vertices of a
convex polygon whose interior contains no points of the set. Such a subset is called an {\it empty
convex $k$-gon} or a {\it k-hole}. Esther Klein showed {$ H(4)=5$} and Harborth \cite{harborth}
proved that {$ H(5)=10$}. Horton \cite{horton} showed
that it is possible to construct arbitrarily large set of points
without a 7-hole, thereby proving that {$ H(k) $} does not exist
for {$ k \geq 7$}. Recently, after a long wait, the existence of $ H(6)$ has been proved
by Gerken \cite{gerken} and independently by Nicol\'as
\cite{nicolas}. Later, Valtr \cite{valtrhexagon} gave a simpler version of Gerken's proof.

These problems can be naturally generalized to polygons that are not necessarily
convex. In particular, we are interested in pseudo-triangles, which are
considered to be the natural counterpart of convex polygons. A pseudo-triangle
is a simple polygon with exactly three vertices having interior angles less
than $180^{\circ}$. A pseudo-triangle with $\ell$ vertices is called a
$\ell$-pseudo-triangle. A set of points is said to contain an empty $\ell$-pseudo-triangle
if there exists a subset of $\ell$ points forming a $\ell$-pseudo-triangle which contains
no point of the set in its interior. A pseudo-triangle with $a, b, c$ as the convex vertices has three concave side chains between the vertices $a, b$  and $b, c$, and $c, a$. Based on the length of the three side chains, a pseudo-triangle can be distinguished into three types: a {\it standard} pseudo-triangle,
if each side chain has at least two edges, a {\it mountain}, if
exactly one side chain has only one edge, and a {\it fan}, if exactly two side
chains consists of only one edge (Figure \ref{fig1}). The {\it apex} of a {\it fan}
pseudo-triangle is the convex vertex having exactly one edge in both
its incident side chains.

%%%%%%%%%%%%%%%%%%%%%%%%%%%%%%%%%%%%%%%%%%%%%%%%%%%%%%%%%%%%%%%%%%%%%%%%%%%%%%%%
\begin{figure*}[h]
\centering
\begin{minipage}[c]{1.0\textwidth}
\centering
\includegraphics[width=5.25in]
    {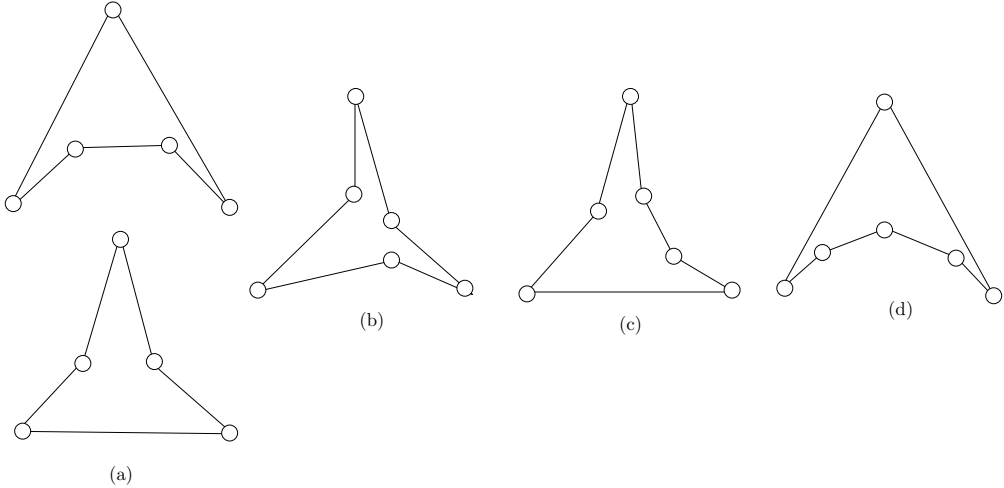}\\
%{\small (a) Forbidden zone of a pentagon $P$.}\\
\end{minipage}%
%\hspace{-0.25in}
\caption{Pseudo-triangles: (a) Types of 5-pseudo-triangles, (b) Standard 6-pseudo-triangle, (c)
6-mountain, (d) 6-fan.}
\label{fig1}
\end{figure*}
%%%%%%%%%%%%%%%%%%%%%%%%%%%%%%%%%%%%%%%%%%%%%%%%%%%%%%%%%%%%%%%%%%%%%%%%%%%%%%%%%%%%%%%%%%%%%%%%%%%%%%%%%%

In spite of considerable research on the various combinatorial and algorithmic
aspects of pseudo-triangles \cite{surveypseudo}, little is known about the existence of empty
pseudo-triangles in planar point sets. Kreveld and Speckmann \cite{empty_pt_speckmann}
devised techniques to analyze the maximum and minimum number of empty
pseudo-triangles defined by any planar point set. Ahn et al. \cite{optimal_empty_pseudo_traingle} considered the optimization problems of computing an empty pseudo-triangle with minimum perimeter, maximum
area, and minimum longest concave chain.

In this paper, analogous to the quantity $H(k)$, we define the Ramsey-type quantity $E(k, \ell)$ as
the smallest integer such that any set of at least $E(k, \ell)$ points in the plane, no
three on a line, contains a $k$-hole or an empty $\ell$-pseudo-triangle. The
existence of $E(k, \ell)$ for all $k, \ell\geq 3$, is a consequence of a result proved by Valtr \cite{valtropencupscaps}, and later by C\v ern\'y \cite{cerny}.

\begin{theorem}[\cite{cerny,valtropencupscaps}]
For any $k,~\ell\leq 3$, there is a least integer $n(k, \ell)$ such that any point
$p$ in any set $S$ of size at least $n(k, \ell)$, in general position, is the apex of an
empty $k$-fan in $S$ or it is one of the vertices of a $\ell$-hole in $S$.
\label{th:valtr}
\end{theorem}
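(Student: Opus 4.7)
My plan is to combine an Erd\H os-Szekeres cups-and-caps extraction, applied in the angular order around $p$, with a pruning induction that enforces emptiness. Fix $p \in S$, apply a rigid motion so that $p$ lies strictly below every other point, and sort $S \setminus \{p\}$ by polar angle around $p$ as $q_1, \ldots, q_{n-1}$. For each triple of indices $i<j<m$, call the triple type $B$ if $q_j$ lies on the $p$-side of the line $q_iq_m$, and type $A$ otherwise; a monotone type-$B$ subsequence is a chain that bulges toward $p$ (cup), while a monotone type-$A$ subsequence bulges away from $p$ (cap).

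Because the angular order around $p$ plays exactly the role that the $x$-order plays in the classical cups-and-caps lemma, for $n$ larger than an explicit binomial in $k$ and $\ell$ we are guaranteed either a type-$B$ chain of length $k-1$ or a type-$A$ chain of length $\ell-1$. Since $p$ lies below all of $S \setminus \{p\}$, a type-$B$ chain $q_{i_1}, \ldots, q_{i_{k-1}}$ together with $p$ is a pseudo-triangle whose only convex vertices are $p, q_{i_1}, q_{i_{k-1}}$ (the intermediate $q_{i_j}$ are reflex because the cup bulges toward $p$), hence a fan with apex $p$; a type-$A$ chain of length $\ell-1$ together with $p$ is a convex $\ell$-gon with $p$ as one of its vertices.

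To upgrade these structures to empty ones I would run the extraction with length targets much larger than $k-1$ and $\ell-1$, then iteratively peel off interior points: whenever the candidate fan or convex polygon encloses a point $p' \in S$, replace a short subchain straddling $p'$ by a detour that excises $p'$ from the interior, which is always possible by incorporating $p'$ itself (or a pair of neighbors of $p'$) into the new chain. Each peel reduces both the number of interior points and the chain length, and if the initial chain is long enough then $k-1$ (resp.\ $\ell-1$) vertices remain once no interior points survive. The main obstacle is the quantitative bookkeeping: one must bound in advance how many peels are ever needed and calibrate the initial extraction length accordingly, which is the Ramsey-theoretic heart of Valtr's and \v Cern\'y's arguments and is what prevents good explicit bounds on $n(k, \ell)$.
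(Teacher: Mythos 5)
First, note that the paper does not actually prove Theorem~\ref{th:valtr}: it is imported verbatim from Valtr and \v Cern\'y, so there is no in-paper proof to match against. Judged against the known arguments, the structural half of your reduction is exactly right and is the reduction those papers use: after placing $p$ below all other points and sorting by angle, a $(k-1)$-point cup bulging toward $p$ closes up with $p$ into a pseudo-triangle whose two side chains at $p$ are single edges (so $p$ is the apex of a $k$-fan), while a $(\ell-1)$-point cap bulging away from $p$ closes up into a convex $\ell$-gon through $p$. This part is fine.

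The genuine gap is the emptiness upgrade, and it is not merely ``quantitative bookkeeping.'' Your peeling scheme cannot be calibrated: the cups-and-caps extraction on $n$ points only guarantees chains of length $\Theta(\log n)$, whereas the candidate fan or polygon may contain $\Theta(n)$ points of $S$ in its interior, so the number of peels needed can vastly exceed the chain-length budget no matter how generously you oversample. The local step is also unsound as stated: a point $p'$ interior to the convex hull of $p$ and a cap can never be made a vertex of a convex polygon containing those points, so ``incorporating $p'$ into the new chain'' either destroys convexity or forces you to discard an uncontrolled number of chain vertices, and the detour can admit new interior points. This is precisely why Valtr and \v Cern\'y do not prune a posteriori; they build openness into the Erd\H os--Szekeres induction itself (roughly, inducting on $k+\ell$ and choosing extremal representatives so that the region attached to the resulting cup or cap is empty by construction), which is where the doubling at each step, and hence the double-exponential bound $n(k,\ell)\leq 2^{\binom{k+\ell-2}{k+1}}+1$ quoted in Section~\ref{sec:intro}, comes from. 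To repair your argument you would need to replace the peeling phase with such an induction. (As a side remark, the inequality $k,\ell\leq 3$ in the statement is a typo for $k,\ell\geq 3$.)
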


Note that $E(k, \ell)\leq n(k, \ell)\leq 2^{{k+\ell-2}\choose{k+1}}+1$, where  the upper bound on $n(k, \ell)$ follows from Valtr \cite{valtropencupscaps}. However, the upper bound  $n(k, \ell)$ is double
exponential in $k+\ell$. In this paper, following the long and illustrious history of the quantities
$ES(k)$ and $H(k)$, we consider the problem of evaluating the exact values of $E(k, \ell)$ for small values of $k$ and $\ell$. Following a series of new results regarding the existence of empty pseudo-triangles in point sets with triangular convex hulls, we determine new bounds on $E(k, \ell)$ for small values of $k$ and $\ell$. We begin by proving that any set of points with three points on the convex hull and at least two, three, or five interior points
always contains an empty 5-pseudo-triangle, an empty 6-pseudo-triangle, or an empty 7-pseudo-triangle, respectively.
Using these three results and other results from the literature, we determine the exact  values of $E(k, 5)$
and $E(5, \ell)$, for all $k, \ell \geq 3$. We also obtain bounds on $E(k, 6)$ and $E(\ell, 6)$, for different values of $k$ and $\ell$ and discuss other implications of our results.

If the condition of emptiness is dropped from $E(k, \ell)$ we get another related quantity $F(k, \ell)$.
Let $F(k, \ell)$ be the smallest integer such that any set of at least
$F(k, \ell)$ points in the plane, no three on a line, contains a convex polygon with
$k$ vertices or a $\ell$-pseudo-triangle. From the Erd\H os-Szekeres theorem
it follows that $F(k, \ell)\leq ES(k)$ for all $k, \ell \geq 3$. Evaluating
non-trivial bounds of $F(k, \ell)$ is also an interesting problem. While addressing a different
problem Aichholzer et al. \cite{toth} showed that $F(6, 6)=12$.  In this paper, using our results on empty-pseudo-triangles and extending a result of Bisztriczky and Fejes T\'oth \cite{fejestoth}, we show that
$F(k, 5)=2k-3$, $F(k, 6)=3k-6$. We also obtain non-trivial bounds on $F(k, 7)$, for $k\geq 3$. Finally, we obtain the exact value of $F(5, \ell)$ and new bounds on $F(6, \ell)$, for $\ell\geq 3$.

The paper is organized as follows. In Section \ref{sec:nd} we introduce the required notations and definitions. In Section \ref{sec:preliminary_pt} we prove two preliminary observations. The results regarding the existence of empty pseudo-triangles in point sets with triangular convex hulls are presented in Section \ref{sec:triangle}. The bounds on $E(k, \ell)$ and $F(k, \ell)$ are presented in Section \ref{sec:e(k,l)} and Section \ref{sec:f(k,l)}, respectively. In Section \ref{conclusion} we summarize our results and give directions for future works.

%The following is a list of related results that are known in the literature:

\section{Notations and Definitions}
\label{sec:nd}

We first introduce the definitions and notations required for the remaining part of the paper.
Let $S$ be a finite set of points in the plane in general position, that is, no three on a line.
Denote the {\it convex hull} of $S$ by $CH(S)$. The
boundary vertices of $CH(S)$, and the points of $S$ in the interior
of $CH(S)$ are denoted by $\bbV(CH(S))$ and $\tilde{\bbI}(CH(S))$, respectively. A region $R$ in the plane is
said to be {\it empty} in $S$ if $R$ contains no elements of $S$ in its interior. Moreover, for any set $T$,
$|T|$ denotes the cardinality of $T$.

By $\mathcal P:=p_1p_2\ldots p_m$ we denote the region bounded by the simple polygon with vertices $\{p_1, p_2, \ldots, p_m\}$ ordered anti-clockwise. Let $\bbV(\mathcal P)$ denote the set of vertices $\{p_1, p_2, \ldots, p_m\}$ and $\bbI(\mathcal P)$ the interior of $\mathcal P$. A simple polygon $\mathcal P_0$ is {\it contained} in a simple polygon $\mathcal P$ if $\mathcal V(\mathcal P_0)\subseteq \mathcal V(\mathcal P)$ and $\mathcal I(\mathcal P_0)\subseteq \mathcal I(\mathcal P)$.

For any three points $p, q, r \in S$, $\bbH(pq, r)$ denotes the open half plane bounded by the line $pq$ containing the point $r$. Similarly, $\mathcal H_c(pq, r)$ denotes the closed half plane bounded by the line $pq$ containing the point $r$. Similarly, $\overline{\bbH}(pq, r)$ is the open half plane bounded by the line $pq$ not containing the point $r$.

The {\it $j$-th convex layer} of $S$, denoted by $L\{j, S\}$, is the
set of points of $S$ that lie on the boundary of $CH(S\backslash\{\bigcup_{i=1}^{j-1}L\{i, S\}\})$, where $L\{1, S\}=\bbV(CH(S))$. %Therefore, $|L\{j, S\}|$ denotes the number of points of $S$ in $j$-th convex layer.

Moreover, if $\angle rpq < \pi$, $Cone(rpq)$ denotes the interior of the
angular domain $\angle rpq$. A point $s \in
Cone(rpq)\cap S$ is called the {\it nearest angular neighbor} of
$\overrightarrow{pq}$ in $Cone(rpq)$ if $Cone(spq)$ is empty in $S$.
Similarly, for any convex region $R$ a point $s \in R\cap S$ is called the {\it nearest angular neighbor} of
$\overrightarrow{pq}$ in $R$ if $Cone(spq)\cap R$ is empty in $S$. Also,
for any convex region $R$, the point $s\in S$, which has the shortest perpendicular distance to the line segment $pq$, $p, q\in S$, is called the {\it nearest neighbor} of $pq$ in $R$.

\section{Empty Pseudo-Triangles: Preliminary Observations}
\label{sec:preliminary_pt}

%Recall that a $k$ points $p_1, p_2, \ldots p_k$ in a configuration, ordered increasingly with respect to their
%$x$-coordinates is said to form a $k-cap$ or a $k-cup$ if the sequence $\beta_1, \beta_2, \ldots, \beta_{k-1}$ is decreasing or
%increasing, respectively, where, let $\beta_i$ be the slope of the line $p_ip_{i+1}$, for $i = 1, 2, \ldots k-1$. The set $P =
%\{p_1,\ldots, p_k\}$ is open, if there exists no point $p$ in the configuration, with $x(p_1) < x(p) < x(p_k)$, such that $p$ lies
%above the polygonal line $p_1p_2\cdots p_k$, where $x(p)$ denotes the $x$-coordinate of the point $p$. Valtr \cite{valtropencupscaps}
%and later C\v ern\'y \cite{cerny} proved that any sufficiently large configuration of points in the plane contains an open $k$-cap or an open $l$-cup, for every $k, l\geq 2$. An important corollary of this result is the following:

%A $k-moon$ is a $k$-point configuration $\bbM$ with a specific point $a(\bbM)\in \bbM$ (called the apex of $\bbM$) such
%that any 4-point subset of $\bbM$ is in convex position if and only if it does not contain $a(\bbM)$. A $k$-moon $\bbM$ is empty in a
%configuration $S$ if the interior of the region $CH(\bbM)\backslash CH(\bbM\backslash \{a(\bbM)\})$ contains no point of $S$.

A pseudo-triangle with vertices $a, b, c$ of the convex hull has three concave side chains between
the pair of vertices $a, b$  and $b, c$, and $c, a$.
We denote the vertices of the pseudo-triangle lying on the concave side chain between $a$ and $b$
by $C(a, b)$. Similarly, we denote by $C(b,c)$ and $C(c,
a)$, the vertices on the concave side chains between $b, c$ and $c, a$, respectively.

In this section, we prove two observations about transformation and reduction of pseudo-triangles.

%%%%%%%%%%%%%%%%%%%%%%%%%%%%%%%%%%%%%%%%%%%%%%%%%%%%%%%%%%%%%%%%%%%%%%%%%%%%%%%%
\begin{figure*}[h]
\centering
\begin{minipage}[c]{0.33\textwidth}
\centering
\includegraphics[width=2.7in]
    {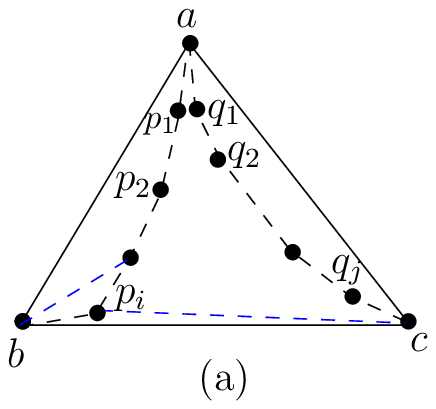}\\
%{\small (a) Forbidden zone of a pentagon $P$.}\\
\end{minipage}%
%\hspace{-0.25in}
\begin{minipage}[c]{0.33\textwidth}
\centering
\includegraphics[width=2.7in]
    {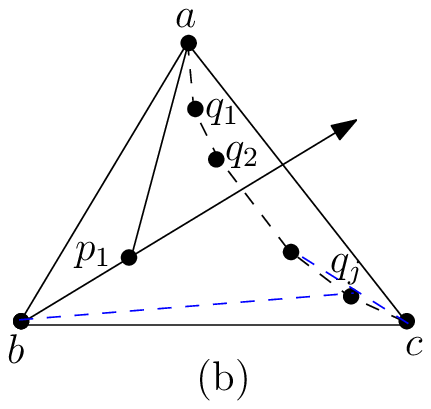}\\
\end{minipage}
%\hspace{-0.25in}
\begin{minipage}[c]{0.33\textwidth}
\centering
\includegraphics[width=2.6in]
    {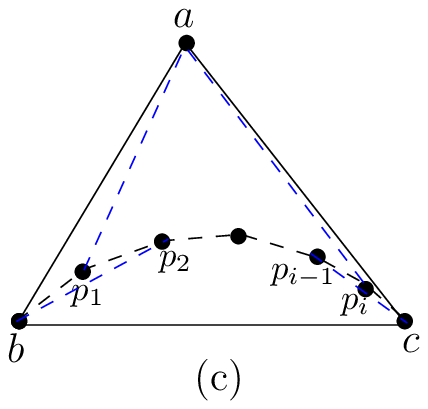}\\
%{\small (a) Forbidden zone of a pentagon $P$.}\\
\end{minipage}%
%\hspace{-0.25in}
\caption{Illustration for the proofs of Observation \ref{ob:pt} and Observation \ref{ob:mm}.}
\label{fig2}
\end{figure*}
%%%%%%%%%%%%%%%%%%%%%%%%%%%%%%%%%%%%%%%%%%%%%%%%%%%%%%%%%%%%%%%%%%%%%%%%%%%%%%%%%%%%%%%%%%%%%%%%%%%%%%%%%%

\begin{observation}
Any $\ell$-pseudo-triangle can transformed to a standard $\ell$-pseudo-triangle, for every $\ell\geq 6$, by
appropriate insertion and deletion of edges.
\label{ob:pt}
\end{observation}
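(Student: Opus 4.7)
The plan is to case-split on the type of the given pseudo-triangle: since a non-standard $\ell$-pseudo-triangle is either a fan or a mountain, I must handle only those two cases, each time exhibiting an explicit local surgery that produces a standard pseudo-triangle on the same vertex set.

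For the fan, let $a$ be the apex and $C(b,c)=(b,r_1,\ldots,r_{\ell-3},c)$ the long chain, with $\ell-3\ge 3$. Because $a$ is the apex, the polyline $(b,r_1,\ldots,r_{\ell-3},c)$ is convex as viewed from $a$, so every chord $ar_i$ lies inside the fan. I would peel off one reflex vertex from each end of the long chain and use them to populate the two currently empty chains at $a$: delete the four edges $\{ab,\ r_1r_2,\ r_{\ell-4}r_{\ell-3},\ ca\}$ and insert the four edges $\{ar_1,\ br_2,\ r_{\ell-4}c,\ r_{\ell-3}a\}$. The resulting boundary
\[
a\to r_1\to b\to r_2\to\cdots\to r_{\ell-4}\to c\to r_{\ell-3}\to a
\]
is a standard $\ell$-pseudo-triangle with chains $(r_1)$, $(r_2,\ldots,r_{\ell-4})$, $(r_{\ell-3})$; the middle chain has $\ell-5\ge 1$ vertices because $\ell\ge 6$. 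Simplicity of the new polygon and the correct convex/reflex status at every vertex follow directly from the convex-from-$a$ property of the original chain.

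For the mountain, let $C(a,b)=\emptyset$, $|C(b,c)|=p\ge 1$, $|C(c,a)|=q\ge 1$, and $p+q=\ell-3\ge 3$. The plan is to populate the empty chain $C(a,b)$ by moving a carefully chosen reflex vertex. Take $r^\star$ to be a reflex vertex of minimum perpendicular distance to the line $ab$, so that the triangle $abr^\star$ is empty of every other vertex. In the generic case, $r^\star$ sits on a chain with at least two reflex vertices; I then delete the two polygon edges incident to $r^\star$ together with $ab$ and insert $ar^\star$, $r^\star b$, and the bypass chord closing up the original chain after the removal of $r^\star$. This yields a standard pseudo-triangle immediately. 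In the exceptional case where $r^\star$ happens to be the only reflex vertex on its chain, a modified swap is required: one simultaneously transfers a second vertex from the longer chain (which must have at least two reflex vertices since $\ell\ge 6$) to the chain that would otherwise be emptied.

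The main obstacle lies in the mountain case: verifying that the new polygon is simple and that each vertex receives the correct convex/reflex status. The two new chords $ar^\star$ and $r^\star b$ cannot cross any surviving edge of the opposite side chain, thanks to the emptiness of the triangle $abr^\star$; they also cannot cross edges of $r^\star$'s original chain because deleting any interior vertex of a concave chain leaves a concave chain. A routine local angle computation at $a$, $b$, and $r^\star$ then confirms the convex/reflex types, so the resulting polygon is indeed a standard $\ell$-pseudo-triangle.
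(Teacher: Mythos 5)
Your fan case and your generic mountain case coincide with the paper's proof: peel one vertex off each end of the fan's long chain, and, for the mountain, relocate the reflex vertex nearest to the single-edge side onto that side when its own chain can spare it. The gap is in the exceptional mountain case, which is exactly where the difficulty of the observation lives, and your ``modified swap'' does not survive scrutiny. Concretely, take $a=(0,0)$, $b=(10,0)$, $c=(5,10)$, single-edge chain $ab$, the chain from $b$ to $c$ passing through $s_1=(8,2)$ and $s_2=(6.8,5)$, and the chain from $c$ to $a$ consisting of the single reflex vertex $r^\star=(1.5,1.5)$. This is a valid $6$-mountain, $r^\star$ minimizes the distance to the line $ab$, and it is alone on its chain. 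Your surgery insists on moving $r^\star$ onto the $ab$ side and backfilling its chain from the long chain; but if $s_2$ backfills, the turn at $a$ along $s_2\to a\to r^\star$ is a right turn (since $r^\star$ lies above the line through $a$ and $s_2$), so $a$ becomes reflex, and if $s_1$ backfills, the new edge $s_1a$ crosses the new edge $r^\star b$, so the polygon is not simple. No choice of backfilling vertex rescues the plan, because the plan's fixed first move (relocating $r^\star$) is itself wrong here: the standard form of this example is $as_1bs_2cr^\star$, i.e., one must leave $r^\star$ in place and instead move $s_1$, the long chain's vertex adjacent to $b$, onto the single-edge side.

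This is why the paper's proof does not apply a single uniform surgery in its $i=1$ case but branches on whether the long chain meets the cone at the single-edge endpoint spanned by that edge and the lone reflex vertex (the condition on $Cone(p_1bc)$ in the paper's notation): in one branch the lone vertex stays put and a long-chain vertex moves onto the single edge; only in the other branch does the lone vertex migrate, with a long-chain vertex backfilling its chain. Your closing assertion that ``a routine local angle computation at $a$, $b$, and $r^\star$ then confirms the convex/reflex types'' is precisely the step that fails without this case distinction, so the exceptional case of your argument needs to be reworked along those lines.
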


\begin{proof}
Let $\mathcal P$ be a non-standard $\ell$-pseudo-triangle with $\ell\geq 6$, having convex
vertices $a, b, c$. Then, we have the following two cases:

\begin{description}
\item[{\it Case} 1:] Let $\mathcal P$ is a $\ell$-mountain with convex chains
$C(a, b)=\{a,$ $p_1, p_2, \ldots, p_i, b\}$, $C(b, c)=\{b,c\}$, and
$C(a, c)=\{a, q_1, q_2, \ldots, q_j, b\}$, such that $i+j+3=\ell$,
arranged as shown in Figure \ref{fig2}(a). Let $s_\alpha$ be the
nearest neighbor of $bc$ in $C(a, b)\cup C(a, c)$.  Then,
$\{b, s_\alpha, c\}$ are the vertices of a concave chain. If $i, j >1$, then both $|C(a, b)\backslash \{s_{\alpha}\}|\geq 1$ and $|C(a, c)\backslash \{s_{\alpha}\}|\geq 1$, and w.l.o.g. we can assume that
$s_{\alpha}\in C(a, b)$. In this case $s_{\alpha}=p_i$ and $\{a, p_1, p_2, \ldots, p_{i-1}, b\}$,
$\{b, p_i, c\}$, and $\{a, q_1, q_2, \ldots, q_j, c\}$ are the vertices of the convex chains
which form a standard $\ell$-pseudo-triangle as shown in  Figure \ref{fig2}(a).
So, w.l.o.g. it suffices to consider the case $i=1$ (Figure \ref{fig2}(b)). If $Cone(p_1bc)$
contains a point of $C(a, c)\backslash\{a, c\}$, then $\{a, p_1,
b\}$, $\{b, q_j, c\}$, and  $\{a, q_1, q_2, \ldots, q_{j-1}, b\}$
are the vertices of the three concave chains of a standard $\ell$-pseudo-triangle.
Otherwise, all the points of $C(a,c)\backslash\{a, c\}$ are in $Cone(abp_1)$,
and $\{a, q_1, b\}$, $\{b, p_1, c\}$, and $\{a, q_2,q_3,\ldots, q_i, c\}$ are the vertices of the three concave chains of a standard $\ell$-pseudo-triangle.

\item[{\it Case} 2:] Let $\mathcal P$ is a $\ell$-fan with $C(a, b)=\{a, b\}$, $C(b,c)=\{b, p_1, p_2, \ldots, p_i, c\}$ and  $C(a, c)=\{a, b\}$, where $i+3=\ell$, as shown in Figure \ref{fig2}(c). Then, the $\ell$-pseudo-triangle with concave chains formed by the
set of vertices $\{a, p_1, b\}$, $\{b, p_2, p_3, \ldots, p_{i-1}, c\}$,
and  $\{a, p_i, b\}$ is standard (Figure \ref{fig2}(c)). \hfill $\Box$
\end{description}
\end{proof}

\begin{observation}
An empty $\ell$-mountain contains an empty $m$-mountain whenever $3\leq m < \ell$.
\label{ob:mm}
\end{observation}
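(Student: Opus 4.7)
The plan is induction on $\ell$, showing that every empty $\ell$-mountain with $\ell\ge 6$ contains an empty $(\ell-1)$-mountain. For the base case $\ell=5$, the $5$-mountain has vertex sequence $a,p_1,b,c,q_1$; I would argue that the chord $p_1q_1$ is a diagonal lying strictly inside the polygon (both endpoints lie above the base edge $bc$, and the segment cannot cross any of the five boundary edges), so the triangle $ap_1q_1$ is an empty $3$-mountain contained in $\mathcal P$. The case $m=4$ is vacuous because a genuine mountain requires at least $5$ vertices.

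For the inductive step, let $\mathcal P$ be an empty $\ell$-mountain with $\ell\ge 6$ having convex vertices $a,b,c$, single-edge chain $C(b,c)=\{b,c\}$, and concave chains $C(a,b)=\{a,p_1,\dots,p_i,b\}$ and $C(a,c)=\{a,q_1,\dots,q_j,c\}$. Since $i+j=\ell-3\ge 3$, I may assume without loss of generality that $i\ge 2$. The proposed reduction is to delete the convex vertex $b$ and let $\mathcal P_0$ be the polygon on $a,p_1,\dots,p_i,c,q_j,\dots,q_1$ in counter\-clockwise order, whose boundary replaces the two edges $p_ib$ and $bc$ of $\mathcal P$ by the single chord $p_ic$. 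The intermediate vertices $p_1,\dots,p_{i-1}$ and $q_1,\dots,q_j$ retain their reflex angles because their neighbors are unchanged; the vertices $a$ and $c$ remain convex for the same reason; and $p_i$ flips from reflex to convex because the convex chain $\{a,p_1,\dots,p_i,b\}$ has monotonically rotating tangent directions, so $c$ lies on the interior side of the oriented edge $p_{i-1}p_i$. Thus $\mathcal P_0$ has convex vertices $\{a,p_i,c\}$, single-edge chain $\{p_i,c\}$, and two non-trivial concave chains $\{a,p_1,\dots,p_i\}$ and $\{c,q_j,\dots,q_1,a\}$, so it is a valid $(\ell-1)$-mountain. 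Applying the induction hypothesis to $\mathcal P_0$ yields empty $m$-mountains for $3\le m<\ell-1$, which together with $\mathcal P_0$ itself gives the claim for all $3\le m<\ell$.

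The main obstacle is the containment $\mathcal P_0\subseteq\mathcal P$, which is equivalent to the chord $p_ic$ lying strictly inside $\mathcal P$, i.e., $b$ being an \emph{ear} of $\mathcal P$. In typical configurations this holds because the chord shortcuts a convex corner into the interior. However, if the opposite chain $C(a,c)$ bulges inward enough that one of its edges crosses $p_ic$, then $b$ is not an ear, and a fallback reduction is required. Invoking the Two-Ears theorem (which guarantees that at least two of $a,b,c$ are ears), one can instead delete $c$ when $j\ge 2$ and $c$ is an ear, or delete the apex $a$ (in which case exactly one of $p_1,q_1$ becomes convex, producing an $(\ell-1)$-mountain with a shifted apex). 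A case analysis on the chain lengths $i,j$ and on which of $a,b,c$ are ears ensures that at least one of these reductions always yields the desired contained sub-mountain.
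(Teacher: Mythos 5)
Your top-level plan (peel off one convex vertex to pass from $\ell$ to $\ell-1$) is the same as the paper's, but the step you yourself flag as ``the main obstacle'' --- certifying that the shortcut chord stays inside the polygon and that the result is again a mountain --- is exactly where the paper does its work, and your proposed resolution does not close it. The paper does not delete a preselected vertex and then hope it is an ear: in the case $i,j>1$ it first takes the chain vertex $s_\alpha$ nearest to the base line $bc$, so that the cut-off triangle lies in the empty strip between $bc$ and the parallel line through $s_\alpha$ and the cut is automatically legal, and it handles $i=1$ by a separate cone argument deciding whether to drop $a$ or $c$. Your fallback instead rests on two unproved claims, one of which is false. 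Deleting the apex $a$ does \emph{not} always turn exactly one of $p_1,q_1$ convex: take the $6$-mountain $a=(0,10)$, $p_1=(-0.1,9)$, $p_2=(-1,5)$, $b=(-5,0)$, $c=(5,0)$, $q_1=(0.1,9)$; here $a$ is an ear, yet after removing it both $p_1$ and $q_1$ become convex and the resulting pentagon has four convex vertices, so it is not a pseudo-triangle at all. Moreover, even when exactly one of $p_1,q_1$ turns convex, you only get a mountain if the surviving single-edge chain is unique: with $j=1$, deleting $c$ always produces a fan (two one-edge chains), and deleting $a$ produces a fan whenever $q_1$ rather than $p_1$ is the vertex that becomes convex. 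So in the genuinely problematic configuration --- $j=1$ and $b$ not an ear, which occurs, e.g., when $q_1$ lies inside the triangle $p_ibc$ --- neither of your stated fallbacks is shown to yield a mountain, and your closing sentence (``a case analysis \ldots\ ensures that at least one of these reductions always yields the desired contained sub-mountain'') is precisely the assertion that needs proof.

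Two smaller points. In the base case, showing that $p_1q_1$ crosses no boundary edge does not place it inside the non-convex pentagon (a non-crossing chord of a simple polygon can lie entirely outside); you need one more observation, e.g.\ that $ab$ and $ac$ are exterior diagonals because $p_1,q_1$ are reflex, so every triangulation of the pentagon is forced to use $p_1q_1$. And in the main reduction, your tangent-rotation argument that $p_i$ becomes convex is shakier than necessary; once you know the cut is legal, it is cleaner to note that all of $p_1,\dots,p_{i-1},q_1,\dots,q_j$ keep their reflex angles, so $a$, $c$ and $p_i$ are the only candidates for convex vertices of the new simple polygon, and every simple polygon has at least three of them.
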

\begin{proof}
We need to show that every empty $\ell$-mountain contains an empty $(\ell-1)$-mountain for any $\ell\geq 4$.
Let $\mathcal P$ be a $\ell$-mountain with $\ell\geq 4$, having convex vertices $a, b, c$. Let
$C(a, b)=\{a,p_1, p_2, \ldots, p_i, b\}$, $C(b, c)=\{b,c\}$, and  $C(a, c)=\{a, q_1, q_2, \ldots, $ $q_j, b\}$ be
the vertices of the three concave chains of $\mathcal P$, such that $i+j+3=\ell$, as shown in Figure \ref{fig2}(a).
If both $i, j >1$, then we can obtain an empty $(\ell-1)$-mountain by taking the nearest neighbor of $bc$
in $C(a,b)\cup C(a,c)$ and removing either $b$ or $c$.

Otherwise, w. l. o. g. assume that $i=1$. If $Cone(p_1bc)\cap (C(a, c)\backslash\{a, c\})$ is non-empty, that is,
$q_j \in Cone(p_1bc)\cap (C(a, c)\backslash\{a, c\})$, then $\{a, p_1, b\}$, $\{b, q_j\}$, and $\{a, q_1, q_2, \ldots, q_{j}\}$ forms an empty $(\ell-1)$-mountain (Figure \ref{fig2}(b)). Similarly, if $q_j \in Cone(abp_1)\cap (C(a, c)\backslash\{a, c\})$, then $\{b, p_1, q_1\}$, $\{b, c\}$, and $\{q_1,q_2,\ldots q_j, c\}$ form an empty $(\ell-1)$-mountain.\hfill $\Box$
\end{proof}

\section{Empty Pseudo-Triangles in Point Sets with Triangular Convex Hulls}
\label{sec:triangle}

In this section we prove three results about the existence of empty pseudo-triangles in point sets with triangular convex hulls. These results will be used later to obtain bounds on $E(k, \ell)$ and $F(k, \ell)$.

\subsection{Empty 5-Pseudo-Triangle}
\label{sec:empty5pt}

\begin{lemma}
Any set $S$ of points in the plane in general position with $|CH(S)|=3$ and $|\tilde{\bbI}(CH(S))|\geq 2$
contains an empty 5-pseudo-triangle.
\label{lm:lm1}
\end{lemma}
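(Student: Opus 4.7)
The plan is to exhibit an explicit empty 5-pseudo-triangle whose convex vertices are the three hull vertices $a,b,c$ of $S$ and whose two reflex vertices $p,q$ are chosen from $\tilde{\bbI}(CH(S))$ by an extremal (nearest-neighbor / nearest-angular-neighbor) principle, and then to verify simplicity, the convex/reflex classification, and emptiness.

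First, I would pick the ``first'' reflex vertex $p$ as the interior point of $S$ minimizing perpendicular distance to the hull edge $ab$; by this choice any other point in $\triangle abp$ would be strictly closer to $ab$ than $p$, so $\triangle abp$ is empty in $S$. Since $|\tilde{\bbI}(CH(S))| \ge 2$, the remaining interior points lie in $\triangle bcp \cup \triangle acp$, and after possibly swapping $b \leftrightarrow c$ I may assume at least one lies in $\triangle bcp$. I would then pick the ``second'' reflex vertex $q$ as the nearest angular neighbor of $\overrightarrow{bp}$ at vertex $b$ within the cone $Cone(cbp)$; by the angular-neighbor definition, $Cone(qbp)$ is empty in $S$, and in particular $\triangle bqp$ is empty.

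Next, I would form the polygon $\mathcal{P} = (a,p,b,q,c)$ in this cyclic CCW order and verify it is a 5-mountain with mountain edge $ca$. The turning angles at $a,b,c$ are each at most the corresponding hull angle of $\triangle abc$ (since $p,q$ lie inside the hull and hence inside the angular wedges at $a,b,c$), so $a,b,c$ are convex; at each of $p,q$, the chain is a detour from a hull edge into the interior of $\triangle abc$ at an interior point, so the turn is a right turn in the CCW polygon and $p,q$ are reflex. For simplicity, the two ``interior'' edges $bq$ and $cp$ lie in the convex sub-triangles $\triangle bcp$ and $\triangle acp \cup \triangle bcp$ respectively, meeting only at the shared vertex $p$, and a short check rules out the remaining non-adjacent edge pairs. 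The two bays of $\mathcal{P}$ are $\triangle abp$ (at $p$) and $\triangle bqc$ (at $q$), and its interior equals $\triangle abc \setminus (\triangle abp \cup \triangle bqc)$.

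The main obstacle is the emptiness verification. Step 1 handles points in $\triangle abp$ and Step 2 handles points in $\triangle bqp$, but interior points of $S$ lying in $\triangle acp$ or in the ``leftover'' sub-triangle $\triangle pqc \subset \triangle bcp$ are not yet ruled out and would lie in the interior of $\mathcal{P}$. The hard part of the proof is therefore a case analysis to dispose of these residual regions, via one of: (a) iterating the angular-neighbor selection---e.g.\ replacing $q$ by the nearest angular neighbor of $\overrightarrow{cp}$ at $c$ in $Cone(bcp)$, which makes $\triangle cqp$ empty instead and allows one to finish by combining both choices; (b) splitting on whether $\triangle acp$ contains an interior point of $S$ and, in the affirmative case, running the symmetric construction with $b$ and $c$ interchanged to produce a 5-mountain whose bay system contains $\triangle acp$; or (c) replacing the 5-mountain $\mathcal{P}$ by a 5-fan with apex chosen so that its single quadrilateral bay swallows the problematic sub-triangle. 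Any variant works, and carrying out the chosen one is where the technical bulk of the lemma lies; the rest is bookkeeping with the empty sub-triangles and nearest (angular) neighbors introduced in Section~\ref{sec:nd}.
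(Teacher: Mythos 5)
Your construction does not close, and you say so yourself: after choosing $p$ nearest to $ab$ (so $\bbI(abp)$ is empty) and $q$ as the nearest angular neighbor of $\overrightarrow{bp}$ in $Cone(cbp)$ (so $\bbI(bqp)$ is empty), the interior of $\mathcal P=apbqc$ still contains the uncontrolled regions $\bbI(acp)$ and $\bbI(pqc)$. Deferring their elimination to ``a case analysis, any variant of which works'' is the entire content of the lemma, not bookkeeping, and the variants you sketch do not in fact work as stated. For (a): there need not exist any single $q$ in $\bbI(bcp)\cap S$ for which both $\bbI(bqp)$ and $\bbI(pqc)$ are empty --- place two points just above the edge $bc$, one near $b$ and one near $c$, so that neither lies in the triangle $bq'c$ determined by the other; then every choice of second reflex vertex leaves a point in one of the two residual triangles. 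For (b): if both $\bbI(acp)$ and $\bbI(bcp)$ contain points of $S$, the construction with $b$ and $c$ interchanged faces the symmetric obstruction, so the dichotomy does not dispose of the general case. (c) is too vague to check. The underlying difficulty is that a 5-pseudo-triangle has only two triangular bays, and no greedy choice of two reflex vertices is guaranteed to sweep all remaining points into them.

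The paper's proof avoids this by not insisting on five vertices at the outset. It takes $p,q$ \emph{radially consecutive around $b$}, so that $Cone(pbq)$ is empty, and sets $C_p=\bbV(CH(\mathcal H_c(bp,a)\cap S))$ and $C_q=\bbV(CH(\mathcal H_c(bq,c)\cap S))$. Every point of $S$ lies in one of these two closed half-planes (their complement meets the triangle only in the empty cone), the two hulls meet only at $b$, and so $\triangle abc$ minus the two hulls is an \emph{empty} $\ell$-mountain with $\ell\geq 5$; Observation~\ref{ob:mm} then shrinks it to an empty 5-mountain. That build-large-then-reduce step via Observation~\ref{ob:mm} is exactly the idea missing from your argument; if you want to salvage your write-up, replace the search for the right single $q$ by taking the whole convex chain of $(S\cap \bbI(bcp))\cup\{b,c,p\}$ (and likewise on the other side) and then invoking the mountain-reduction observation.
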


%%%%%%%%%%%%%%%%%%%%%%%%%%%%%%%%%%%%%%%%%%%%%%%%%%%%%%%%%%%%%%%%%%%%%%%%%%%%%%%%
\begin{figure*}[h]
\centering
\begin{minipage}[c]{1.0\textwidth}
\centering
\includegraphics[width=2.0in]
    {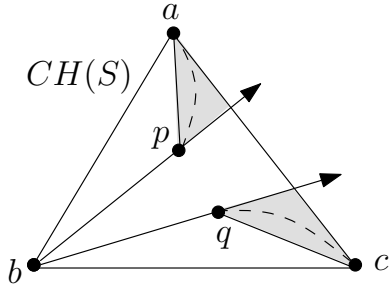}\\
%{\small (a) Forbidden zone of a pentagon $P$.}\\
\end{minipage}%
\caption{Illustration for the proof of Lemma \ref{lm:lm1}.}
\label{figure_e(5)}
\end{figure*}
%%%%%%%%%%%%%%%%%%%%%%%%%%%%%%%%%%%%%%%%%%%%%%%%%%%%%%%%%%%%%%%%%%%%%%%%%%%%%%%%%%%%%%%%%%%%%%%%%%%%%%%%%%

\begin{proof}Let $\bbV(CH(S))=\{a, b, c\}$, where the vertices taken in counter-clockwise order.
Consider two points $p, q\in \tilde{\bbI}(CH(S))$, which are consecutive in
the radial order around the vertex $b$ of $\bbV(CH(S))$, that is, $Cone(pbq)$
is empty in $S$.
%Let $R_1$ and $R_2$ be the two shaded regions inside $CH(S)$, as shown in Figure \ref{figure_e(5)}(a).
%If both the regions $R_1$ and $R_2$ are empty in $S$, then $apbqc$ is an empty 5-pseudo-triangle.
Let $C_p=\bbV(CH(\mathcal H_c (bp, a)\cap S))$ and
$C_q=\bbV(CH(\mathcal H_c (bq, c)\cap S))$ (Figure \ref{figure_e(5)}). Observe that
$C_p\cup C_q$ form an empty $\ell$-mountain with $\ell\geq 5$.
The existence of an empty 5-pseudo-triangle now follows from Observation \ref{ob:mm}. \hfill $\Box$
\end{proof}

\subsection{Empty 6-Pseudo-Triangle}
\label{sec:empty6pt}

\begin{lemma}
Any set $S$ of points in the plane in general position with $|CH(S)|=3$ and $|\tilde{\bbI}(CH(S))|\geq 3$
contains an empty standard 6-pseudo-triangle.
\label{lm2}
\end{lemma}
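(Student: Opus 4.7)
The plan is to extend the mountain construction from Lemma~\ref{lm:lm1}. I pick two interior points $p, q$ that are radially consecutive around vertex $b$ of $CH(S)$ and form $C_p = \mathcal{V}(CH(\mathcal{H}_c(bp,a) \cap S))$ and $C_q = \mathcal{V}(CH(\mathcal{H}_c(bq,c) \cap S))$ exactly as in Lemma~\ref{lm:lm1}. This yields an empty $\ell$-mountain $C_p \cup C_q$ with $\ell = |C_p| + |C_q| - 1 \geq 5$. If $\ell \geq 6$, I apply Observation~\ref{ob:mm} repeatedly to shrink it to an empty $6$-mountain and then Observation~\ref{ob:pt} to transform it into an empty standard $6$-pseudo-triangle, completing the argument in this case.

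The substantial work lies in the case $\ell = 5$, when $C_p = \{a,b,p\}$ and $C_q = \{b,c,q\}$ are both triangles. Because $Cone(pbq)$ is empty by consecutivity and $|C_p| = |C_q| = 3$, every interior point of $S$ other than $p, q$ must lie strictly inside the triangle $abp$ or strictly inside the triangle $bqc$. Since $|\tilde{\bbI}(CH(S))| \geq 3$, at least one such extra point exists; without loss of generality assume some such extra point lies in the triangle $abp$, and let $r$ be the one chosen so that $(r, p)$ are radially consecutive around $b$. Re-running the Lemma~\ref{lm:lm1} construction with the pair $(r, p)$ produces a new $c$-side set $\mathcal{H}_c(bp,c) \cap S$ which contains both $p$ and $q$; if $q$ is a convex-hull vertex of this set then the new mountain already has $\geq 6$ vertices, reducing to the easy case above.

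Otherwise $q$ lies strictly inside the triangle $bcp$, and the configuration is tightly constrained: $r$ is inside the triangle $abp$, $q$ is inside the triangle $bcp$, and $p$ is the ``middle'' interior vertex in radial order around $b$. In this sub-case I would verify directly that the hexagon with vertices $a, r, b, q, c, p$ taken in this cyclic order is an empty standard $6$-pseudo-triangle. The verification requires checking (i) that the polygon is simple, (ii) that the interior angles at $a, b, c$ are convex and those at $r, q, p$ are reflex, and (iii) that the interior contains no point of $S$; the emptiness follows from the strict containments of the ear triangles $arb \subset abp$ and $bqc \subset bcp$ together with the fact that any further interior points of $S$ (when $|\tilde{\bbI}(CH(S))| > 3$) are forced by the choice of $r$ and the structure of $C_p, C_q$ to lie in the three ear regions outside the hexagon. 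The main obstacle will be this direct geometric verification, where the forced positions of $r, p, q$ inside their respective sub-triangles must be carefully exploited to establish simplicity and the correct convex--reflex angle pattern simultaneously.
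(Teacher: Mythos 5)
Your overall strategy --- iterating the mountain construction of Lemma~\ref{lm:lm1} around the fixed hull vertex $b$ and falling back on a direct hexagon in the degenerate case --- is genuinely different from the paper's proof, which first settles $|\tilde{\bbI}(CH(S))|=3$ by a case analysis on how the remaining interior points distribute around a point $q$ with $\bbI(qbc)$ empty, and then handles larger sets by taking a standard 6-pseudo-triangle $apbqcr$ with the minimal number of interior points and deriving a contradiction. However, your argument has a concrete gap in the emptiness claim for the hexagon $arbqcp$. Choosing $r$ so that $(r,p)$ are radially consecutive around $b$ only guarantees that $Cone(rbp)$ is empty; it does not force the remaining points of the triangle $abp$ into the ear $arb$. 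A point $z$ of $S$ inside $abp$, lying in the cone at $b$ between $ba$ and $br$ but on the far side of the line $ar$ from $b$ (that is, in the triangle cut off by $a$, $r$, and the point where $\overrightarrow{br}$ meets $ap$), belongs to none of the three ears $arb$, $bqc$, $cpa$, hence lies in the interior of your hexagon. So your sentence asserting that all further interior points are ``forced by the choice of $r$'' into the ear regions is false as stated. The repair is the check symmetric to the one you perform on the $c$-side: you must also require $\bbV(CH(\mathcal H_c(br,a)\cap S))=\{a,b,r\}$, and if this fails the new mountain again has at least $6$ vertices. With that extra case added, the hexagon is indeed a simple, empty, standard 6-pseudo-triangle; the simplicity and convex--reflex verification you deferred does go through, since $r$ interior to $abp$ forces the radial order $c,p,r,b$ around $a$ and $q$ interior to $bpc$ forces the order $b,q,p,a$ around $c$, so the three ears lie in pairwise disjoint cones at $a$, $b$, $c$.

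The second, subtler problem is your ``easy case.'' Observation~\ref{ob:mm} does preserve emptiness, but Observation~\ref{ob:pt} is a statement about transforming a pseudo-triangle as a polygon; the transformation inserts edges that annex regions lying \emph{outside} the original polygon (ears of the concave chains), and in your construction those ears are exactly where the non-hull points of $\mathcal H_c(bp,a)\cap S$ and $\mathcal H_c(bq,c)\cap S$ live. Hence an empty $6$-mountain obtained by shrinking your $\ell$-mountain need not transform into an \emph{empty} standard 6-pseudo-triangle via Observation~\ref{ob:pt}. (The paper can afford to invoke Observation~\ref{ob:pt} inside its proof of Lemma~\ref{lm2} only in the situation where every point of $S$ is a vertex of the polygon being transformed, so emptiness is automatic.) To close this case you need either a direct construction of a standard pseudo-triangle from the two hull chains that accounts for the points in the ears, or a descent argument of the kind the paper uses.
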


%%%%%%%%%%%%%%%%%%%%%%%%%%%%%%%%%%%%%%%%%%%%%%%%%%%%%%%%%%%%%%%%%%%%%%%%%%%%%%%%
\begin{figure*}[h]
\centering
\begin{minipage}[c]{0.33\textwidth}
\centering
\includegraphics[width=1.8in]
    {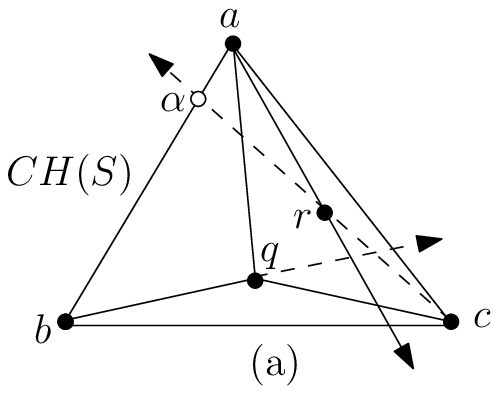}\\
%{\small (a) Forbidden zone of a pentagon $P$.}\\
\end{minipage}%
%\hspace{-0.25in}
\begin{minipage}[c]{0.3\textwidth}
\centering
\includegraphics[width=1.8in]
    {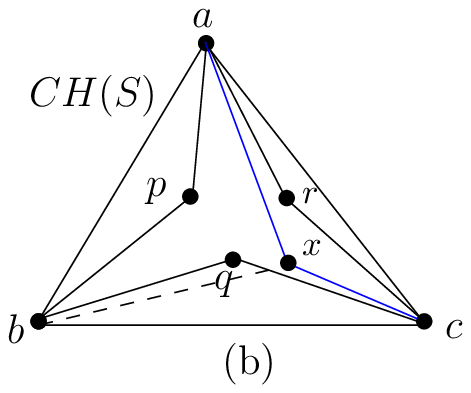}\\
\end{minipage}
%\hspace{-0.25in}
\begin{minipage}[c]{0.33\textwidth}
\centering
\includegraphics[width=1.8in]
    {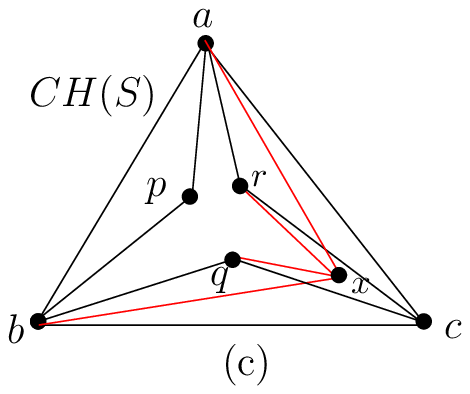}\\
\end{minipage}
%\hspace{-0.25in}
\caption{Illustration for the proof of Lemma \ref{lm2}}
\label{fig3}\vspace{-0.2in}
\end{figure*}
%%%%%%%%%%%%%%%%%%%%%%%%%%%%%%%%%%%%%%%%%%%%%%%%%%%%%%%%%%%%%%%%%%%%%%%%%%%%%%%%%%%%%%%%%%%%%%%%%%%%%%%%%%

\begin{proof}Let $\bbV(CH(S))=\{a, b, c\}$, with the vertices taken in counter-clockwise order.
Suppose that $|\tilde{\bbI}(CH(S))|=\{p, q, r\}$, and $q$ be such that $\bbI(qbc)$ is empty in $S$ (Figure \ref{fig3}(a)). When both $\bbI(qab)$ and $\bbI(qac)$ are non-empty in $S$, either $apbqcr$ or $arbqcp$ forms
an empty 6-pseudo-triangle. Therefore, w. l. o. g. assume that
$\bbI(qab)\cap S$ is empty and $p, r\in\bbI(qac)\cap S$. Let $r$ be the first angular neighbor
of $\overrightarrow{ac}$ in $Cone(qac)$ and $\alpha$ be
the point where $\overrightarrow{cr}$ intersects the boundary of $CH(S)$.
If $p\in Cone(ar\alpha )$,  then $aprcqb$ is an empty 6-pseudo-triangle.
Otherwise, $Cone(ar\alpha)$ is empty and
either $arcpbq$ or $arcqbp$ is an empty 6-pseudo-triangle. This empty
pseudo-triangle can be transformed to an empty standard 6-pseudo-triangle
by Observation \ref{ob:pt}.

Next, suppose that there are more than three points in $\tilde{\bbI}(CH(S))$.
It follows from the above that there are three points $p, q, r\in \tilde{\bbI}(CH(S))$ such that
$\mathcal A_1=apbqcr$ is a standard 6-pseudo-triangle with minimal number of interior points.

If $\mathcal A_1$ is not empty, there exists a point $x\in S$ in the interior of $\mathcal A_1$. The
three line segments $xa$, $xb$, and $xc$ may or may not intersect the boundary of $\mathcal A_1$.
If any two of these line segments, say $xa$ and $xc$, do not intersect the edges of $\mathcal A_1$, then
$\mathcal A_2=apbqcx$ is a standard 6-pseudo-triangle which is contained in
$\mathcal A_1$ (Figure \ref{fig3}(b)). Otherwise, there are two segments, say
$xa$ and $xb$, which intersect the edges of $\mathcal A_1$. In this case,
$\mathcal A_2=apbqxr$ is a standard 6-pseudo-triangle contained in $\mathcal A_1$, containing less interior points than $\mathcal A_1$ (Figure \ref{fig3}(c)). This contradicts the minimality of $\mathcal A_1$ and implies that $\mathcal A_1$ is empty in $S$. \hfill $\Box$
\end{proof}

\subsection{Empty 7-Pseudo-Triangles}
\label{sec:e(7)}

Let $S$ be a set of points in the plane in general position.
An interior point $p\in S$ is called a $(x,y, z)-splitter$ of $CH(S)$ if $|\bbV(CH(S))| = 3$ and the three triangles formed inside $CH(S)$ by the three line segments $pa$, $pb$, and $pc$ contain $x\geq y \geq z$ interior points of $S$.

We use this definition to establish a sufficient condition for the
existence of an empty 7-pseudo-triangle in sets having triangular
convex hull.

\begin{theorem}Any set $S$ of points in the plane in general position with $|CH(S)|=3$ and $|\tilde{\bbI}(CH(S))|\geq 5$ contains an empty 7-pseudo-triangle. Moreover, there
exists a set $S$ with $|CH(S)|=3$ and $|\tilde{\bbI}(CH(S))|=4$, that does not contain a 7-pseudo-triangle.
\label{lm:e7}
\end{theorem}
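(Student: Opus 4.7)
The statement has two parts: sufficiency and tightness. I will sketch each.

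\emph{Sufficiency.} Let $\bbV(CH(S))=\{a,b,c\}$ (counter-clockwise) and $n=|\tilde{\bbI}(CH(S))|\geq 5$. The plan is to pick $q\in\tilde{\bbI}(CH(S))$ as the nearest neighbor of edge $bc$, so that $\bbI(qbc)$ is empty in $S$, and case-analyze the split of the remaining $n-1\geq 4$ interior points between the sub-triangles $\bbI(aqb)$ and $\bbI(aqc)$. Writing the counts as $n_L\geq n_R$ (WLOG), one has $n_L\geq 3$ in every case except $n=5$ with $(n_L,n_R)=(2,2)$. The overall goal is to assemble an empty $7$-pseudo-triangle in $abc$ from smaller empty pseudo-triangles produced by Lemmas~\ref{lm:lm1} and~\ref{lm2}.

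When $n_L\geq 3$, I would apply Lemma~\ref{lm2} inside the triangular-hull set $\{a,q,b\}\cup(\bbI(aqb)\cap S)$ to obtain an empty standard $6$-pseudo-triangle $\mathcal P'$ with convex corners $a,q,b$, and then reroute $\mathcal P'$ into a $7$-pseudo-triangle in $abc$ by promoting $q$ from a convex corner to a reflex vertex, with $c$ as the replacement corner: the new chain from $b$ to $c$ is the old $b$-to-$q$ chain of $\mathcal P'$ followed by the edge $q\to c$ (which stays inside the empty region $\bbI(qbc)$), the new chain from $c$ to $a$ is the edge $c\to q$ together with the old $q$-to-$a$ chain shortcut past $q$, and the old $a$-to-$b$ chain is preserved. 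When a point of $\bbI(aqc)\cap S$ lies in the resulting enclosed region, it is absorbed by a nearest-angular-neighbor refinement of the new $c$-to-$a$ chain. In the remaining subcase $n=5$, $(n_L,n_R)=(2,2)$, I would apply Lemma~\ref{lm:lm1} inside $aqb$ and inside $aqc$ to get two empty $5$-pseudo-triangles sharing the convex corner $q$, splice them at $q$, and normalise the result via Observations~\ref{ob:pt} and~\ref{ob:mm}.

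The \emph{main obstacle} is the rerouting / splicing step: after promoting $q$ to a reflex vertex and/or splicing two sub-polygons at $q$, one has simultaneously to verify that the enlarged polygon is simple, that it remains empty in $S$ (which is precisely where the emptiness of $\bbI(qbc)$ is used), and that concavity is preserved at every former corner. Each subcase requires its own small geometric check, with Observation~\ref{ob:pt} used to swap between standard, mountain and fan types and Observation~\ref{ob:mm} used to trim any polygon with more than seven vertices back down to seven.

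\emph{Tightness.} For the lower bound, I would exhibit a specific $7$-point set with $|CH(S)|=3$ and four interior points $p_1,p_2,p_3,p_4$ positioned so that no partition of them as reflex vertices on three concave chains from $a$ to $b$, $b$ to $c$, $c$ to $a$ is geometrically realisable. Any putative $7$-pseudo-triangle would have to use all seven points as vertices, with $a,b,c$ as the three convex corners and $p_1,p_2,p_3,p_4$ as four reflex vertices distributed across the three chains in some pattern $(n_{ab},n_{bc},n_{ca})$ summing to $4$. Enumerating the patterns --- standard $(2,1,1)$ and its permutations, mountains $(0,j,4-j)$ for $j\in\{1,2,3\}$, and fan $(0,0,4)$ --- and exploiting the specific placement of the $p_i$'s (for instance, one of them trapped inside the convex hull of the other three, with the other three close to distinct edges of $abc$), one verifies that in every pattern some chain fails concavity or one of the $p_i$'s is forced into the polygon's interior rather than onto its boundary, yielding the desired counterexample with $|\tilde{\bbI}(CH(S))|=4$.
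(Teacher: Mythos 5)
There is a genuine gap, and it sits exactly where you locate your ``main obstacle'': the rerouting/splicing step cannot be completed with the tools you invoke. Your plan is to promote $q$ from a convex corner of the empty $6$-pseudo-triangle $\mathcal P'\subset aqb$ to a reflex vertex of a new polygon with corners $a,b,c$, and to repair any failure of emptiness by inserting nearest angular neighbors into the new $c$-to-$a$ chain and then ``trimming back down to seven'' via Observations \ref{ob:pt} and \ref{ob:mm}. But Observation \ref{ob:pt} only converts an $\ell$-pseudo-triangle into a \emph{standard} $\ell$-pseudo-triangle with the same number of vertices, and Observation \ref{ob:mm} only shortens \emph{mountains}; neither gives you an empty $7$-pseudo-triangle inside an empty standard $m$-pseudo-triangle with $m>7$. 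Indeed no such general reduction is available: deleting a reflex vertex from a concave chain \emph{enlarges} the enclosed region, so emptiness is destroyed, which is precisely why the paper must run a separate minimality argument. Since your construction must absorb the points of $\bbI(aqc)$ lying in the newly enclosed territory, you generically end with $m>7$ vertices and no way back. The rerouting also has unaddressed local failures: concavity at the promoted vertex $q$ (the angle $\angle x_2qc$ on the new $b$-to-$c$ chain) is not automatic and depends on where $x_2$ sits inside $aqb$. The $(2,2)$ splicing case is worse: Lemma \ref{lm:lm1} gives no control over which points are the convex corners of the $5$-pseudo-triangle it produces (its proof reduces an $\ell$-mountain built from two hull vertices), so the two sub-polygons need not share the corner $q$ at all; and even if they did, two simple polygons with disjoint interiors meeting at a single vertex do not splice into a simple polygon. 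A further warning sign: your construction always forces the convex corners to be $a,b,c$, whereas the paper's proof explicitly allows the corners of the final empty $7$-pseudo-triangle to drift away from $\bbV(CH(S))$ (e.g.\ Cases 2.3.2, 3.1.2, 3.2.2), which suggests that insisting on $a,b,c$ may not always be achievable.

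For comparison, the paper proceeds in two genuinely different stages: first a \emph{non-empty} $7$-pseudo-triangle is produced by classifying whether some interior point is a $(2,1,1)$-splitter (Lemma \ref{lm:e7_lemma}); then a standard $7$-pseudo-triangle $\mathcal P_0$ with the minimal number of interior points is shown to be empty, by checking in each configuration of an interior point $x$ that one can either build a standard $7$-pseudo-triangle with strictly fewer interior points or exhibit an empty $k$-mountain with $k\geq 7$ (to which Observation \ref{ob:mm} legitimately applies). If you want to salvage your approach, you would need to prove a lemma of the form ``an empty standard $m$-pseudo-triangle contains an empty $7$-pseudo-triangle for $m>7$,'' which is not in the paper and is not obviously true. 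Finally, your tightness argument is only a plan: you describe the shape of a counterexample but never fix a configuration or carry out the enumeration, whereas the paper's argument is structural (on $7$ points the three corners must be $a,b,c$, some side chain has at least three edges, and that chain forces a $4$-hole with two consecutive hull vertices, a condition violated by the explicit set of Figure \ref{fig:example_lower}(a)).
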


%%%%%%%%%%%%%%%%%%%%%%%%%%%%%%%%%%%%%%%%%%%%%%%%%%%%%%%%%%%%%%%%%%%%%%%%%%%%%%%%
\begin{figure*}[h]
\centering
\begin{minipage}[c]{0.33\textwidth}
\centering
\includegraphics[width=2.6in]
    {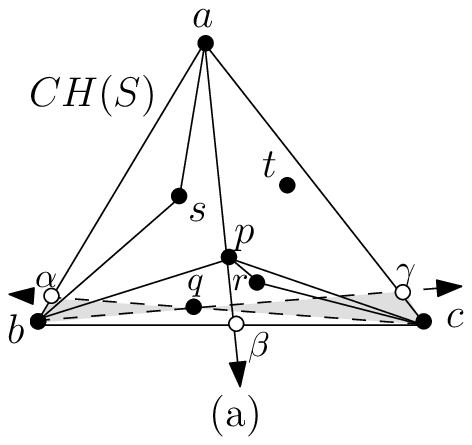}\\
%{\small (a) Forbidden zone of a pentagon $P$.}\\
\end{minipage}%
%\hspace{-0.25in}
\begin{minipage}[c]{0.33\textwidth}
\centering
\includegraphics[width=2.6in]
    {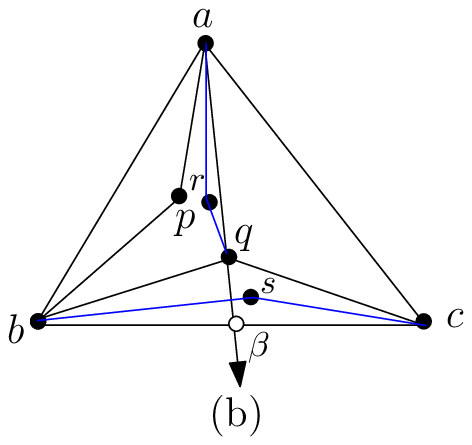}\\
\end{minipage}
%\hspace{-0.25in}
\begin{minipage}[c]{0.33\textwidth}
\centering
\includegraphics[width=2.57in]
    {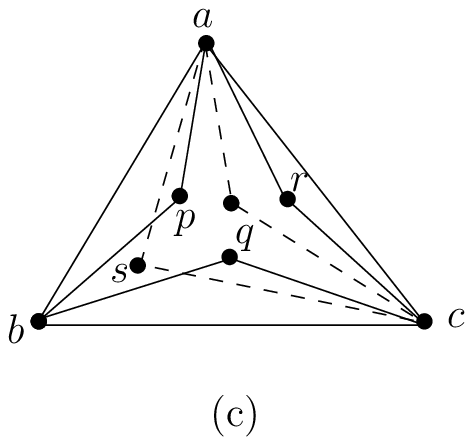}\\
%{\small (a) Forbidden zone of a pentagon $P$.}\\
\end{minipage}%
%\hspace{-0.25in}

\caption{Illustration for the proof of Lemma \ref{lm:e7_lemma}.}
\label{fig:e(7)existence}
\end{figure*}
%%%%%%%%%%%%%%%%%%%%%%%%%%%%%%%%%%%%%%%%%%%%%%%%%%%%%%%%%%%%%%%%%%%%%%%%%%%%%%%%%%%%%%%%%%%%%%%%%%%%%%%%%%

\subsubsection{Proof of Theorem \ref{lm:e7}}

We begin the proof of Theorem \ref{lm:e7} with the following lemma:

\begin{lemma}Any set $S$ of points in the plane, in general position, with $|CH(S)|=3$ and $|\tilde{\bbI}(CH(S))|\geq 5$,
contains a 7-pseudo-triangle.
\label{lm:e7_lemma}
\end{lemma}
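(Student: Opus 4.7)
The plan is to adapt the radial-ordering construction of Lemma \ref{lm:lm1}: radially sort the interior points around the hull vertex $b$, take two consecutive rays, form the convex hulls of the points on either side, and argue that for some choice these hulls give a mountain with at least $7$ vertices. Observation \ref{ob:mm} then extracts a $7$-mountain, which is in particular a $7$-pseudo-triangle.

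Let $\bbV(CH(S)) = \{a, b, c\}$ in counter-clockwise order, and let $p_1, p_2, p_3, p_4, p_5$ be five interior points of $S$ sorted radially around $b$ from $\overrightarrow{ba}$ to $\overrightarrow{bc}$. For each $i \in \{1, 2, 3, 4\}$, the cone $Cone(p_i b p_{i+1})$ is empty in $S$, and as in the proof of Lemma \ref{lm:lm1} the set
\[
\mathcal{M}_i \;=\; \bbV\bigl(CH(\mathcal{H}_c(bp_i, a) \cap S)\bigr) \;\cup\; \bbV\bigl(CH(\mathcal{H}_c(bp_{i+1}, c) \cap S)\bigr)
\]
is the vertex set of an empty $\ell_i$-mountain with convex vertices $a, b, c$. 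The two hulls share only $b$, each hull includes the corresponding outer-hull vertex ($a$ or $c$) and the anchor ($p_i$ or $p_{i+1}$), so $\ell_i \geq 5$, and $\ell_i$ is boosted beyond $5$ by every other interior point that is extreme in its half-plane.

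It therefore suffices to find an $i$ with $\ell_i \geq 7$, that is, an $i$ for which at least two of the three points in $\{p_1, \ldots, p_5\} \setminus \{p_i, p_{i+1}\}$ appear as boundary vertices of the respective hull. A useful monotonicity is that an interior point $p_j$ which is strictly inside the convex hull of $\mathcal{H}_c(bp_i, a) \cap S$ remains inside for every larger index (since that set grows with $i$), and symmetrically the $c$-side hulls shrink with $i$. Together with a case analysis on the first convex layer of $\{p_1, \ldots, p_5\}$, these monotonicities single out an admissible $i$ in every configuration: when all five interior points are in convex position every $i$ trivially gives $\ell_i = 8$, and when one or two of them lie strictly inside the convex hull of the others the monotonicities force the hidden points to be ``absorbed'' inside one of the two hulls for at least one of the four choices of $i$.

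The main obstacle lies precisely in this case analysis. When multiple $p_j$'s are hidden inside the convex layer of the others, the naive count $\ell_i \geq 5$ is loose, and one has to argue that the ``hidden at index $i$'' property cannot simultaneously rob two candidate points for all four indices $i \in \{1,2,3,4\}$. The monotonicity along increasing $i$ for the $a$-side hulls and along decreasing $i$ for the $c$-side hulls is what ultimately rules out such a simultaneous obstruction.
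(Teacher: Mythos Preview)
Your approach---building mountains by splitting the interior points along consecutive radial rays from a single hull vertex $b$ and arguing that some split yields a $7$-mountain---is quite different from the paper's proof, which instead first looks for a $(2,1,1)$-splitter among the interior points and, when none exists, bootstraps from the standard $6$-pseudo-triangle supplied by Lemma~\ref{lm2}.

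However, your argument has a genuine gap already in the case you label as ``trivial''. The assertion that ``when all five interior points are in convex position every $i$ trivially gives $\ell_i = 8$'' is false. Convex position of $\{p_1,\dots,p_5\}$ does \emph{not} force each $p_j$ with $j<i$ (resp.\ $j>i+1$) to be a vertex of the augmented hull $CH(\{a,b,p_1,\dots,p_i\})$ (resp.\ $CH(\{b,c,p_{i+1},\dots,p_5\})$), because adjoining the far-away outer vertices $a,b$ or $b,c$ can swallow several of the $p_j$'s at once. Concretely, take
\[
a=(-10,0),\quad b=(0,10),\quad c=(10,0),
\]
\[
p_1=(-5,4.5),\ \ p_2=(-4,4),\ \ p_3=(0,6),\ \ p_4=(4,4),\ \ p_5=(5,4.5).
\]
The five interior points form a convex pentagon and are listed in radial order around $b$. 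Yet $p_3,p_4,p_5\in\bbI(bcp_2)$, $p_4,p_5\in\bbI(bcp_3)$, $p_5\in\bbI(bcp_4)$, and symmetrically on the $a$-side, so for every $i\in\{1,2,3,4\}$ both augmented hulls are triangles and $\ell_i=5$. Thus no choice of $i$ produces even a $6$-mountain from the pivot $b$, and your monotonicity considerations cannot rescue the argument as stated. (The configuration does of course contain a $7$-pseudo-triangle, as the lemma predicts, but not via this construction around $b$.) At a minimum you would need to allow the pivot to vary over all three hull vertices, and even then a complete case analysis replacing the informal ``monotonicities force\dots'' paragraph would be required.
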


\begin{proof}
Let $\bbV(CH(S))=\{a, b, c\}$ with the vertices taken in counter-clockwise order. Since we have to find a 7-pseudo-triangle, which is not necessarily empty, it suffices to assume that $|\tilde{\bbI}(CH(S))|=5$.

Assume that $p\in \tilde{\bbI}(CH(S))$ is such that $\bbI(pab)$, $\bbI(pbc)$, and $\bbI(pca)$ are all non-empty in $S$. Therefore, $p$ must be a $(2,1,1)$-splitter of $CH(S)$. Without loss of generality, let $q, r\in \bbI(pbc)\cap S$,
$s\in \bbI(pab)\cap S$, and $t\in \bbI(pac)\cap S$ be such that $q$ is the nearest
angular neighbor of $\overrightarrow{bc}$ in $\bbI(pbc)$. Let $\alpha, \beta, \gamma$ be the points where
$\overrightarrow{cq}, \overrightarrow{ap}, \overrightarrow{bq}$ intersect the boundary of $CH(S)$, respectively.
Let $R_1=\bbI(bq\alpha)\cap \bbI(bpc)$ and $R_2=\bbI(cq\gamma)\cap \bbI(bpc)$ (see Figure \ref{fig:e(7)existence}(a)).
If $r\in R_1\cup R_2$, then $asbqrcp$ or $asbrqcp$ is a 7-pseudo-triangle.
Thus, assume that $(R_1\cup R_2)\cap S$ is empty. If $r\in \bbI(\beta pc)\cap S$, then $asbqcrp$ is a 7-pseudo-triangle.
Otherwise, $r\in\bbI(\beta pb)\cap S$, and $aprbqct$ is a 7-pseudo-triangle.

Therefore, suppose that none of the interior points of $CH(S)$ is a $(2,1, 1)$-splitter of $CH(S)$.
From the proof of Lemma \ref{lm2} it is clear that the three vertices of $CH(S)$ along with any three points $p, q, r\in \tilde{\bbI}(CH(S))$ form a standard 6-pseudo-triangle $\mathcal P=apbqcr$. This 6-pseudo-triangle has the vertices of $CH(S)$ as the three convex vertices, and it is not necessarily empty. Now, there are two cases:

\begin{description}
\item[{\it Case} 1:] $\mathcal P$ is empty in $S$. The remaining two points $s$ and $t$ in $\tilde{\bbI}(CH(S))$, must be in some of the three triangles - $ pab$, $ qbc$, and $ rca$. W. l. o. g., assume that
$s\in \bbI(qbc)\cap S$. Since $q$ is not a $(2,1,1)$-splitter, either $\bbI(qab)\cap S$ or $\bbI(qac)\cap S$
is empty in $S$. If $\bbI(qac)\cap S$ is empty, $apbscqr$ is a 7-pseudo-triangle (Figure \ref{fig:e(7)existence}(b)). Otherwise, $\bbI(qab)$ is empty in $S$ then $apqbscr$ is a 7-pseudo-triangle. .

\item[{\it Case} 2:]$\mathcal P$ is non-empty in $S$. Let $s\in \bbI(\mathcal P)\cap S$. If any one of three line segments $sa$, $sb$, or
$sc$ intersects the boundary of $\mathcal P$ we get a 7-pseudo-triangle. Otherwise, two of these three segments go directly, and we have a smaller 6-pseudo-triangle with $a, b, c$ as its convex vertices (Figure \ref{fig:e(7)existence}(c)).
Continuing in this way, we finally get a 7-pseudo-triangle or an empty 6-pseudo-triangle with $a, b, c$ as its convex vertices, which then reduces to {\it Case} 1. \hfill $\Box$
\end{description}
\end{proof}

%%%%%%%%%%%%%%%%%%%%%%%%%%%%%%%%%%%%%%%%%%%%%%%%%%%%%%%%%%%%%%%%%%%%%%%%%%%%%%%%
\begin{figure*}[h]
\centering
\begin{minipage}[r]{0.5\textwidth}
\centering
\includegraphics[width=3.5in]
    {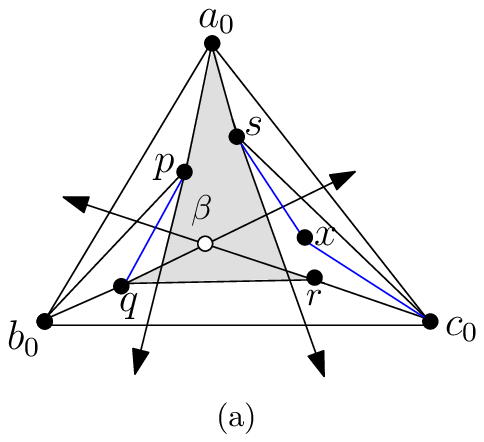}\\
%{\small (a) Forbidden zone of a pentagon $P$.}\\
\end{minipage}%
%\hspace{-0.25in}
\begin{minipage}[c]{0.5\textwidth}
\centering
\includegraphics[width=3.5in]
    {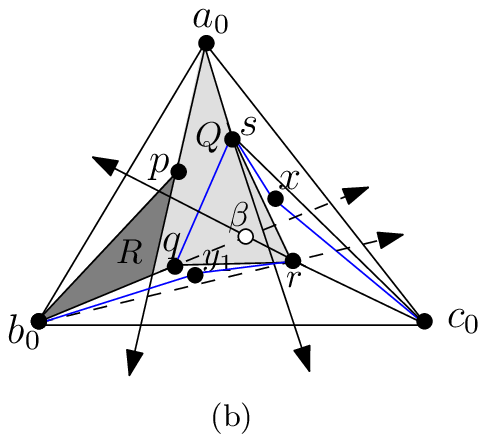}\\
\end{minipage}
\caption{Existence of an empty 7-pseudo-triangle:
(a) $q, r \notin \bbI(Cone(pa_0s))\cap S$, (b) $q\in \bbI(Cone(pa_0s))\cap S$ and $r\notin \bbI(Cone(pa_0s))\cap S$, and (c) $q, r \in \bbI(Cone(pa_0s))\cap S$.}
\label{fig:e(7)empty}
\end{figure*}
%%%%%%%%%%%%%%%%%%%%%%%%%%%%%%%%%%%%%%%%%%%%%%%%%%%%%%%%%%%%%%%%%%%%%%%%%%%%%%%%%%%%%%%%%%%%%%%%%%%%%%%%%%

Lemma 3 implies that any triangle with more than 4 interior points contains a standard 7-pseudo-triangle.
Now we will obtain an empty 7-pseudo-triangle. Let $S$ be a set of points with $|CH(S)|=3$ and $|\tilde{\bbI}(CH(S))|\geq 5$. Let $\mathcal P_0=a_0pb_0qrc_0s$ be a standard 7-pseudo-triangle contained in $S$ with convex vertices $a_0, b_0, c_0$, and the minimal number of interior points among all the standard 7-pseudo-triangles contained in $S$. Note that the points $a_0, b_0, c_0$ may not be the vertices of $CH(S)$. Now, we have the following three cases:

\begin{description}
\item[{\it Case} 1:] $q, r \notin Cone(pa_0s)\cap S$. Let $\beta$ be the point of intersection of $\overrightarrow{b_0q}$ and $\overrightarrow{c_0r}$, and $x\in
\bbI(\mathcal P_0)\cap S$. If $x\in \bbI(qr\beta)\cap S$,
then $\mathcal P_1=a_0pqxrc_0s$ is a smaller 7-pseudo-triangle contained in
$\mathcal P_0$. Therefore, $\bbI(qr\beta)\cap S$ can be assumed to be empty.
Observe that, if (i) the line segment $xa_0$, and either
of the line segments $xb_0$ or $xc_0$ do not intersect the boundary of $\mathcal P_0$, or (ii) both the line segments $xb_0$ and $xc_0$ intersect the boundary of $\mathcal P_0$, then we can easily construct a
7-pseudo-triangle with fewer interior points than $\mathcal P_0$. Therefore, the shaded
region $Q$ inside $\mathcal P_0$, shown in Figure \ref{fig:e(7)empty}(a), must be
empty. Thus, $x$ lies outside this shaded region and either
$a_0pqrc_0xs$ or $a_0pxb_0qrs$ is a 7-pseudo-triangle with fewer interior
points than $\mathcal P_0$ (Figure \ref{fig:e(7)empty}(a)).

\item[{\it Case} 2:]$q\in Cone(pa_0s)\cap S$ and $r\notin \bbI(Cone(pa_0s))\cap S$.
By similar arguments as in {\it Case} 1, the lightly shaded region $Q$ inside $\mathcal P_0$ shown in Figure \ref{fig:e(7)empty}(b) is empty in $S$. Let $R$ be the deeply shaded region inside $\mathcal P_0$ as shown in Figure \ref{fig:e(7)empty}(b). If $x\in R$, then $a_0pxb_0qrs$ is a 7-pseudo-triangle with fewer interior points than $\mathcal P_0$. Therefore, $Q\cup R$ can be assumed to be empty in $S$. Let $x\in \bbI(\mathcal P_0)\backslash(Q\cup R)\cap S$. The following cases may arise:
\begin{description}
\item[{\it Case} 2.1:]$x$ lies below the line $\overrightarrow{b_0r}$. Then both
$xa_0$ and $xb_0$ intersect the boundary of $\mathcal P_0$ and $a_0pb_0qrxs$ is a 7-pseudo-triangle with fewer interior points.
\item[{\it Case} 2.2:]$x$ lies above the line $\overrightarrow{b_0r}$ but below $\overrightarrow{b_0q}$. Then $a_0pb_0qxc_0s$ is a 7-pseudo-triangle with fewer interior points.
\item[{\it Case} 2.3:]All the interior points of $\mathcal P_0$ must be above the line $\overrightarrow{b_0q}$. If $\bbI(b_0qr)\cap S$ is empty, $a_0qb_0rc_0xs$ is a 7-pseudo-triangle with fewer interior points. Otherwise, $\bbI(b_0qr)\cap S$ is non-empty. Let $Z=(\bbI(b_0qr)\cap S)\cup\{b_0, r\}=\{b_0, y_1, y_2, \ldots, y_m, r\}$, where $m \geq 1$.
\begin{description}
\item[{\it Case} 2.3.1:]$|CH(Z)|\geq 4$, that is, $m \geq 2$. Then $b_0qx_0c_0ry_1\ldots y_m$ forms an empty $k$-mountain, with $k\geq 7$, where $x_0$ is the nearest angular neighbor of $\overrightarrow{b_0q}$ in $\mathcal H(b_0q, a_0)\cap \bbI(\mathcal P_0)$. Hence, $\mathcal P_0$ contains an empty 7-pseudo-triangle from Observation \ref{ob:mm}.
\item[{\it Case} 2.3.2:]$|CH(Z)|=3$. In this case $\bbV(CH(S))=\{b_0, y_1, r\}$, and $\mathcal P_1=b_0y_1rc_0xsq$ is a 7-pseudo-triangle having fewer interior points than $\mathcal P_0$.
\end{description}
\end{description}
\item[{\it Case} 3:] $q, r \in Cone(pa_0s)\cap S$. By similar arguments as in {\it Case} 1 and {\it Case} 2, the lightly shaded region $Q$ inside $\mathcal P_0$, shown in Figure \ref{fig:e(7)empty_n}(a), can be assumed to be empty. Let $x\in (\bbI(\mathcal P_0)\backslash Q)\cap S=(R_1\cup R_2)\cap S$ (see Figure \ref{fig:e(7)empty_n}(a)). W.l.o.g. assume that $x\in R_2\cap S$. The following cases may arise:
\begin{description}
\item[{\it Case} 3.1:]$\bbI(qr\beta)\cap S$ is empty. If $\bbI(b_0qr)\cap S$ is empty, $a_0qb_0rc_0xs$ is a 7-pseudo-triangle with fewer interior points. Otherwise, $\bbI(b_0qr)\cap S$ is non-empty. Let $Z=(\bbI(b_0qr)\cap S)\cup\{b_0, r\}=\{b_0, y_1, y_2, \ldots, y_m, r\}$, where $m \geq 1$.
\begin{description}
\item[{\it Case} 3.1.1:]$|CH(Z)|\geq 4$, that is, $m \geq 2$. Then $b_0qx_0cry_1\ldots y_m$ forms an empty $k$-mountain, with $k\geq 7$, where $x_0$ is the nearest angular neighbor of $\overrightarrow{b_0q}$ in $\mathcal H(b_0q, a_0)\cap \bbI(\mathcal P_0)$. Hence, $\mathcal P_0$ contains an empty 7-pseudo-triangle from Observation \ref{ob:mm}.
\item[{\it Case} 3.1.2:]$|CH(Z)|=3$. In this case $\bbV(CH(S))=\{b_0, y_1, r\}$, and $\mathcal P_1=b_0y_1rc_0xsq$ is a 7-pseudo-triangle having fewer interior points than $\mathcal P_0$.
\end{description}
\item[{\it Case} 3.2:]$\bbI(qr\beta)\cap S$ is non-empty. Let $z \in \bbI(qr\beta)\cap S$. If there exists another point $x\in R_1\cup R_2$ (where $R_1$ and $R_2$ are as shown in Figure \ref{fig:e(7)empty_n}(b)), then either $\mathcal P_1=a_0pxb_0qzr$ (if $x\in R_1$) or $\mathcal P_1=a_0qzrc_0xs$ (if $x\in R_2$) is a 7-pseudo-triangle with $|\bbI(\mathcal P_1)\cap S|<|\bbI(\mathcal P_0)\cap S|$, where $z$ is any point in $\bbI(qr\beta)$. Therefore, assume that $R_1\cup R_2$ is empty in $S$. Let $Z'=\bbV(CH((\bbI(qr\beta)\cap S)\cup\{q, r\}))=\{q, u_1, u_2, \ldots, u_w, r\}$, with $w \geq 1$.
\begin{description}
\item[{\it Case} 3.2.1:]$|Z'|\geq 4$. This means $w\geq 2$, and $a_0pb_0qu_1\ldots u_wr$ is an empty $k$-mountain, with $k\geq 7$. This can be shortened to obtain an empty 7-mountain by Observation \ref{ob:mm}.
\item[{\it Case} 3.2.2:]$|Z'|=3$. Then $Z'=\{q, u_1, r\}$. Now, if $|\bbI(qb_0u_1)\cap S|=0$, then $a_0qb_0u_1rc_0s$ is 7-pseudo-triangle contained in $\mathcal P_0$ with less interior points. Therefore, assume that $|\bbI(qb_0u_1)\cap S|\geq 1$. Let $Z_1=\bbV(CH((\bbI(b_0\beta r)\cap S)\cup\{b_0, r\}))=\{b_0, v_1, v_2, \ldots, v_h, r\}$, with $h \geq 1$. As $|\bbI(qb_0u_1)\cap S|\geq 1$, we have $|Z_1|\geq 4$, that is, $h \geq 2$. Consider the following two cases:
\begin{description}
\item[$|Z_1|\geq 5$:] Then $a_0qb_0v_1\ldots v_hr$ is an empty $k$-mountain, with $k\geq 7$, Hence, $\mathcal P_0$ contains an empty 7-pseudo-triangle from Observation \ref{ob:mm}.
\item[$|Z_1|=4$:] This implies that $Z_1=\{b_0, u_1, v_1, r\}$. If $v_1\in \mathcal H(a_0q, b_0)\cap S$, then $a_0qv_1u_1rc_0s$ is an empty 7-pseudo-triangle. Otherwise, $v_1\in \mathcal H(a_0q, c_0)\cap S$, and $\mathcal P_1:=a_0v_1b_0u_1rc_0s$ is a 7-pseudo-triangle, where $\bbI(\mathcal P_1)\backslash\bbI(b_0u_1v_1)$ is empty in $S$. Now, if $\bbI(b_0u_1v_1)\cap S$ is non-empty, then $\mathcal P_1$ can be easily reduced to an empty 7-pseudo-triangle $a_0xyu_1rc_0s$, where $x$ is the nearest angular neighbor of $\overrightarrow{a_0u_1}$ in $\bbI(b_0u_1v_1)$, and $y$ is the nearest angular neighbor of $\overrightarrow{x\alpha}$ in $\bbI(b_0u_1v_1)$ (see Figure \ref{fig:e(7)empty_n}(c)). Note that $y$ may be equal to $b_0$.
\end{description}
\end{description}
\end{description}
\end{description}

%%%%%%%%%%%%%%%%%%%%%%%%%%%%%%%%%%%%%%%%%%%%%%%%%%%%%%%%%%%%%%%%%%%%%%%%%%%%%%%%
\begin{figure*}[h]
\centering
\begin{minipage}[c]{0.33\textwidth}
\centering
\includegraphics[width=2.95in]
    {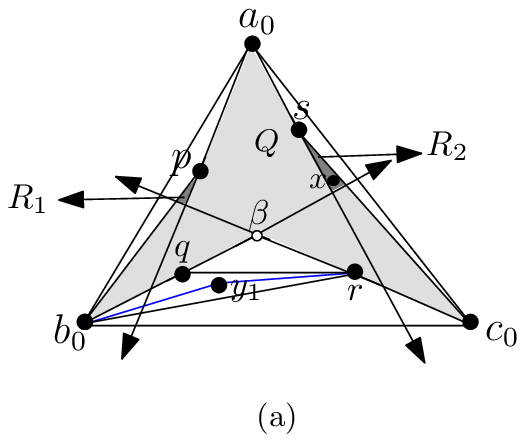}\\
%{\small (a) Forbidden zone of a pentagon $P$.}\\
\end{minipage}%
%\hspace{-0.25in}
\begin{minipage}[c]{0.33\textwidth}
\centering
\includegraphics[width=2.95in]
    {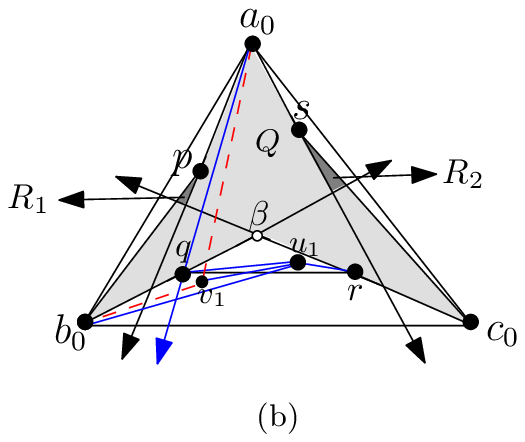}\\
%{\small (a) Forbidden zone of a pentagon $P$.}\\
\end{minipage}%
\begin{minipage}[c]{0.33\textwidth}
\centering
\includegraphics[width=1.85in]
    {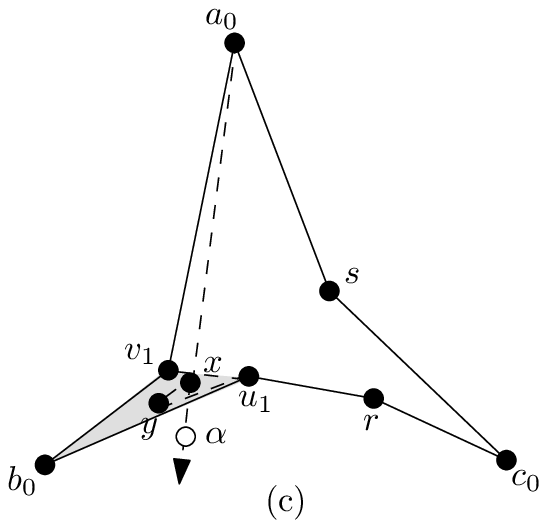}\\
\end{minipage}
\caption{Existence of an empty 7-pseudo-triangle: $q, r \in \bbI(Cone(pa_0s))\cap S$.}
\label{fig:e(7)empty_n}
\end{figure*}
%%%%%%%%%%%%%%%%%%%%%%%%%%%%%%%%%%%%%%%%%%%%%%%%%%%%%%%%%%%%%%%%%%%%%%%%%%%%%%%%%%%%%%%%%%%%%%%%%%%%%%%%%%

From Lemma \ref{lm:e7_lemma} and the three cases discussed above we obtain that any set of points $S$ in the plane in general position with $|CH(S)|=3$ and $|\tilde{\bbI}(CH(S))|\geq 5$ contains an empty 7-pseudo-triangle.

To show that this is tight, observe that one of the side chains of a 7-pseudo-triangle must have at least three edges. Therefore, any set $S$ with $|CH(S)|=3$ and $|\tilde{\bbI}(CH(S))|=4$  containing a 7-pseudo-triangle must
contain a 4-hole with exactly two consecutive vertices belonging to the vertices of $\bbV(CH(S))$.
It is easy to see that this condition is violated in the point set shown in Figure \ref{fig:example_lower}(a), and the
result follows.

\section{$E(k, \ell)$}
\label{sec:e(k,l)}

As mentioned earlier, $E(k, \ell)$ is the smallest integer such that any set of at least $E(k, \ell)$ points in the plane, no three on a line, contains a $k$-hole or an empty $\ell$-pseudo-triangle. The
existence of $E(k, \ell)$ for all $k, \ell\geq 3$, is a consequence of Theorem \ref{th:valtr} from above \cite{cerny,valtropencupscaps}. However, the general upper bound on $E(k, \ell)$ obtained from
Valtr's \cite{valtropencupscaps} result is double exponential in $k+\ell$. In this section we obtain new bounds on $E(k, \ell)$ for small values of $k$ and $\ell$.

It is clear that $E(k, 3)=E(3, \ell)=3$, for all $k, \ell\geq 3$. Also, $E(k, 4)=k$ for $k\geq 4$ and $E(4, \ell)=5$,  $\ell\geq 5$, since $H(4)=5$. Using the results from the previous sections we will obtain bounds on $E(k, \ell)$ for small values of $k$ and  $\ell$.

We introduce the notion of $\lambda$-convexity, where $\lambda$ is a non-negative integer.
A set $S$ of points in the plane in general position is said to be $\lambda$-{\it convex} if every triangle determined by $S$ contains at most $\lambda$ points of $S$. Valtr \cite{kconvexvaltr,valtropencupscaps} and Kun and Lippner \cite{kunkconvex} proved that for any $\lambda\geq 1$ and $\nu\geq 3$, there is a least integer $N(\lambda, \nu)$ such that any $\lambda$-convex point set of size at least $N(\lambda, \nu)$ contains a $\nu$-hole. The best known upper bound on $N(\lambda, \nu)$ for general $\lambda$ and $\nu$, due to Valtr \cite{valtropencupscaps}, is $N(\lambda, \nu)\leq 2^{{{\lambda+\nu}\choose{\lambda+2}}-1}+1 $, which is double-exponential in $\lambda+\nu$. All known lower bounds on $N(\lambda, \nu)$ are exponential in $\lambda + \nu$.

\subsection{$E(k, 5)$}

In this section we determine the exact value of $E(k, 5)$. We will use Lemma \ref{lm:lm1} and a result of
K\'arolyi et al. \cite{1convexperiodica}.

Although, in general, there is a gap of exponential of $\lambda+\nu$ between the best known upper and lower bounds of $N(\lambda, \nu)$, in the case when $\lambda=1$ much more can be said.
Kun and Lippner \cite{kunkconvex} proved the general upper bound $N(1, \nu)\leq 2^{\lceil(2\nu+5)/3\rceil}$. K\'arolyi et al. \cite{1convexpach} proved that $N(1, \nu)\geq M_\nu$ for odd values of $\nu$, where

$$M_\nu:=\left\{
  \begin{array}{ll}
    2^{(\nu+1)/2}-1, & \hbox{for $\nu\geq 3$ odd;} \\
    \frac{3}{2}2^{\nu/2}-1, & \hbox{for $\nu\geq 4$ even.}
  \end{array}
\right.$$
Finally, K\'arolyi et al. \cite{1convexperiodica} proved that for any $\nu\geq 3$, $N(1, \nu)=M_\nu$.

Using this result, we prove the following theorem:

\begin{theorem}For every positive integer $k\geq 3$, $E(k, 5)=M_k$.
\label{th:E(k,5)}
\end{theorem}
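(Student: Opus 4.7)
The plan is to prove the equality by two matching inequalities, using Lemma \ref{lm:lm1} for the upper bound and a construction of K\'arolyi, Pach, and T\'oth for the lower bound. The key observation underpinning both directions is that any $5$-pseudo-triangle uses $3$ convex vertices (which form its convex hull) and $2$ reflex vertices that must lie in the interior of the triangle spanned by those $3$ convex vertices. Consequently, the existence of any $5$-pseudo-triangle in a point set $S$ is equivalent to the existence of a triangle determined by $S$ whose interior contains at least two points of $S$; in the language of $\lambda$-convexity, a $1$-convex point set admits no $5$-pseudo-triangle at all, and in particular no empty one.

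For the upper bound $E(k, 5) \leq M_k$, let $S$ be an arbitrary point set in general position with $|S| \geq M_k$. If $S$ is $1$-convex, then by $N(1, k) = M_k$ (K\'arolyi et al.\ \cite{1convexperiodica}) $S$ contains a $k$-hole and we are done. Otherwise some triangle $T$ with vertices in $S$ contains at least two points of $S$ in its interior. Setting $S' := \bbV(T) \cup (\mathrm{int}(T) \cap S)$, we have $|CH(S')| = 3$ and $|\tilde{\bbI}(CH(S'))| \geq 2$, so Lemma \ref{lm:lm1} produces an empty $5$-pseudo-triangle in $S'$. Since every point of $S$ lying inside $T$ already belongs to $S'$, emptiness in $S'$ upgrades to emptiness in $S$, finishing this direction.

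For the lower bound $E(k, 5) \geq M_k$, I would invoke the construction of K\'arolyi et al.\ \cite{1convexperiodica} witnessing $N(1, k) = M_k$, which furnishes a $1$-convex set $S_0$ of size $M_k - 1$ containing no $k$-hole. By the observation above, a $1$-convex set contains no $5$-pseudo-triangle whatsoever, so a fortiori $S_0$ contains no empty $5$-pseudo-triangle. Hence $S_0$ witnesses that $M_k - 1$ points are insufficient, giving $E(k, 5) \geq M_k$.

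The only delicate point is verifying that emptiness of the $5$-pseudo-triangle in the sub-set $S'$ transfers to $S$; this reduces to noting that the pseudo-triangle is contained in the closed triangle $T$ and all points of $S \cap T$ were placed into $S'$ by construction. The combinatorial fact that the three convex vertices of any $5$-pseudo-triangle form its convex hull (so reflex vertices are strictly inside) is the structural input that connects $\lambda$-convexity to the absence of $5$-pseudo-triangles, and is the main conceptual step.
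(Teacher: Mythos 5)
Your proposal is correct and follows essentially the same route as the paper: the upper bound combines Lemma \ref{lm:lm1} with $N(1,k)=M_k$ by splitting on whether $S$ is $1$-convex, and the lower bound uses the extremal $1$-convex set of size $M_k-1$ without a $k$-hole, which can contain no $5$-pseudo-triangle at all. Your added care in checking that emptiness inside the subset $S'$ transfers to $S$, and in justifying that the two reflex vertices of a $5$-pseudo-triangle lie strictly inside the triangle of its convex vertices, makes explicit two points the paper leaves implicit, but the argument is the same.
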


\begin{proof}Let $S$ be a set of $M_k$ points in the plane, in general position.
If there are three points in $S$ such that the triangle determined by them contains more than 1 point of $S$ in its interior, then by Lemma \ref{lm:lm1} $S$ contains an empty 5-pseudo-triangle.
Therefore, $S$ contains a empty 5-pseudo-triangle unless $S$ is 1-convex. However, the maximum size of a 1-convex set not containing a 5-hole is $N(1, k)-1=M_k-1$. Therefore, if $S$ is 1-convex, it always contains a 5-hole. This implies that $E(k, 5)\leq M_k$.

Moreover, if a set is 1-convex, it does not contain any empty 5-pseudo-triangle. This implies that $E(k, 5)> N(1, k)-1=M_k-1$, and it completes the proof that for every $k\geq 3$, $E(k, 5)=M_k$. \hfill $\Box$
\end{proof}

\subsection{$E(5, \ell)$}

It is obvious that $E(5, 3)=3$ and $E(5, 4)=5$. It follows from Theorem \ref{th:E(k,5)} that $E(5, 5)=7$.
In this section we will determine the values of $E(5, \ell)$, for $\ell\geq 6$. We will use the following:

\begin{theorem}[\cite{bbbsdpentagon}] Any set $Z$ of 9 points in the plane in general position, with $|CH(Z)|\geq 4$, contains a 5-hole. \hfill $\Box$
\label{th:5gon9}
\end{theorem}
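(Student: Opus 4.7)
The plan is to prove the result by case analysis on $h := |CH(Z)| \in \{4,5,6,7,8,9\}$. Intuitively, when $h$ is large the hull supplies many vertices in convex position and few interior points can simultaneously obstruct every candidate empty pentagon, whereas the case $h = 4$ concentrates the full difficulty of Harborth's theorem $H(5) = 10$. In every case I would look for either five consecutive hull vertices forming a 5-hole, or for a 5-hole obtained by replacing one hull vertex by the nearest angular neighbor (in the sense of Section \ref{sec:nd}) of a suitably chosen edge.

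For the upper range $h \geq 6$ there are at most three interior points. When $h = 9$ any five consecutive hull vertices form a 5-hole. For $h = 8$ I would show that a diagonal of the octagon separates the single interior point from five consecutive hull vertices. For $h \in \{6, 7\}$ I would fix a hull vertex $v$ and triangulate $CH(Z)$ from $v$; since at most $9 - h$ of the $h - 2$ resulting triangles can contain an interior point, at least three consecutive triangles are empty, and the union of their outer hull vertices gives an empty convex pentagon, possibly after substituting the nearest angular neighbor of the offending interior point for a hull vertex adjacent to a non-empty triangle.

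For $h = 5$ the hull is a pentagon $v_1 v_2 v_3 v_4 v_5$ containing four interior points. If the hull pentagon is itself empty we are done. Otherwise, I would pick an interior point $p$ of minimum perpendicular distance to some hull edge $v_i v_{i+1}$, use the cones $Cone(v_j p v_{j+1})$ as a partition of the four interior points among five cones, and by pigeonhole locate two consecutive empty cones; combining $p$ with the three hull vertices spanning these two cones and the nearest angular neighbor across the remaining cone yields the desired 5-hole.

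The principal obstacle is $h = 4$: the hull $abcd$ contains five interior points, and no direct pigeonhole is available. My plan is to split by the diagonal $ac$ into triangles $\triangle abc$ and $\triangle acd$, partitioning the five interior points as $(m, 5-m)$ with $m \geq 3$ after relabelling. Working inside $\triangle abc$ together with its three hull vertices, I would compute the convex layers; if the outer layer has five or more points, five consecutive ones yield a 5-hole, and otherwise I would combine the extremal points of the inner layer with $a, b, c$ and with the nearest interior points across the diagonal $ac$ to manufacture a 5-hole. The main technical challenge is a careful sub-case analysis ensuring that every configuration of the five interior points admits such a 5-hole; the pathological obstructions one must rule out are precisely those that look locally like Harborth's 9-point counterexample, and the assumption $|CH(Z)| \geq 4$ is what forbids them from globally fitting together. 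A parallel route I would try, as a check, is to add a tenth point $x$ far outside $CH(Z)$ so that $x$ becomes a hull vertex of $Z \cup \{x\}$ and then use $H(5) = 10$: any resulting 5-hole using $x$ would force the other four vertices to be consecutive on the far side of $Z$, which combined with $|CH(Z)| \geq 4$ should be ruled out by a rotation argument.
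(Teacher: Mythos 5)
The paper does not actually prove this statement: it is imported verbatim from \cite{bbbsdpentagon} (the authors' companion paper on disjoint empty convex pentagons), so there is no internal proof to compare yours against, and your attempt must stand on its own. As written it is a plan rather than a proof, and several of its load-bearing steps fail. For $h\in\{6,7\}$ the pigeonhole is arithmetically wrong: a fan triangulation from a hull vertex yields $h-2$ triangles and $9-h$ interior points, so for $h=6$ you have four triangles and three interior points, which guarantees only \emph{one} empty triangle, not three consecutive ones; even for $h=7$ the three guaranteed empty triangles among five need not be consecutive (the occupied ones can be $T_2$ and $T_4$). For $h=5$ the three interior points other than $p$ lie in five cones, and the two guaranteed empty cones need not be adjacent (occupy cones $1$, $3$, $5$); moreover, even granting two adjacent empty cones, you only have the empty quadrilateral $pv_jv_{j+1}v_{j+2}$, and the proposed fifth vertex taken ``across the remaining cone'' is not shown to preserve either convexity or emptiness of the resulting pentagon. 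Only $h\in\{8,9\}$ are genuinely disposed of (and there your chord argument for $h=8$ does work).

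The case $h=4$, which you rightly identify as carrying essentially all of the difficulty, is deferred to ``a careful sub-case analysis'' that is never carried out; the convex-layer split across a diagonal is a plausible opening move, but nothing is established, and this is where the actual proof in \cite{bbbsdpentagon} spends most of its effort. The fallback route via $H(5)=10$ also does not close the argument: after adjoining a far-away point $x$, Harborth's theorem produces a 5-hole of $Z\cup\{x\}$ that may genuinely use $x$, and such a pentagon cannot be ``ruled out by a rotation argument'' --- it exists --- so you would instead need to convert it into a 5-hole of $Z$ alone, which is exactly the content of the theorem you are trying to prove. In summary, there are concrete gaps in the cases $h=4,5,6,7$, and the proposal does not constitute a proof.
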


%%%%%%%%%%%%%%%%%%%%%%%%%%%%%%%%%%%%%%%%%%%%%%%%%%%%%%%%%%%%%%%%%%%%%%%%%%%%%%%%
\begin{figure*}[h]
\centering
\begin{minipage}[c]{0.33\textwidth}
\centering
\includegraphics[width=1.78in]
    {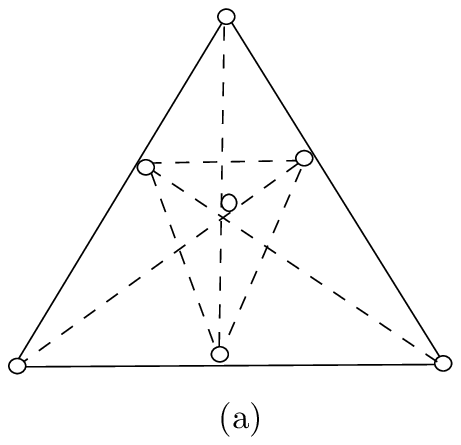}\\
%{\small (a) Forbidden zone of a pentagon $P$.}\\
\end{minipage}%
%\hspace{-0.25in}
\begin{minipage}[c]{0.3\textwidth}
\centering
\includegraphics[width=1.71in]
    {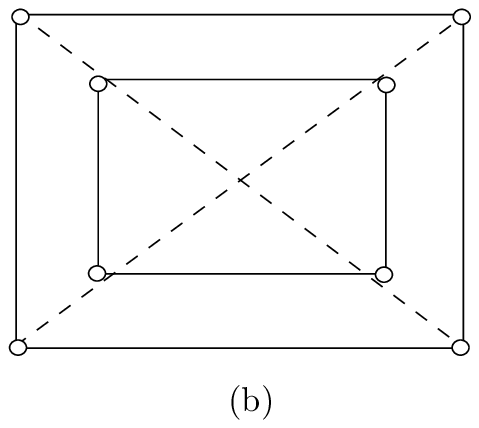}\\
\end{minipage}
%\hspace{-0.25in}
\begin{minipage}[c]{0.33\textwidth}
\centering
\includegraphics[width=1.73in]
    {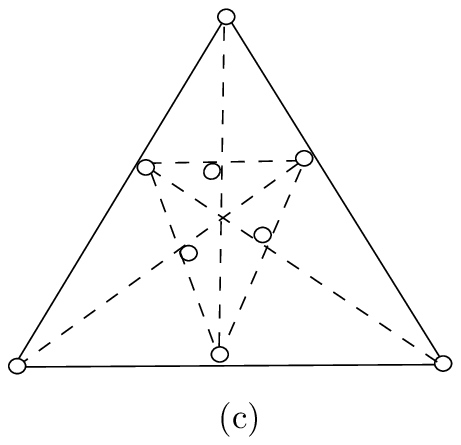}\\
\end{minipage}
%\hspace{-0.25in}
\caption{(a) Triangle with 4 interior points and no 7-pseudo-triangle, (b) 8 points with no 5-hole and no 6-pseudo triangle or 7-pseudo-triangle, and (c) 9 points with no 5-hole and no 8-pseudo-triangle.}
\label{fig:example_lower}
\end{figure*}
%%%%%%%%%%%%%%%%%%%%%%%%%%%%%%%%%%%%%%%%%%%%%%%%%%%%%%%%%%%%%%%%%%%%%%%%%%%%%%%%%%%%%%%%%%%%%%%%%%%%%%%%%%

Using Lemma \ref{lm:lm1} and the above theorem, we will determine the exact values of $E(5, \ell)$ for $\ell\geq 6$.

\begin{theorem}$E(5, 6)=E(5, 7)=9$, and $E(5, \ell)=10$, for $\ell\geq 8$.
\label{th:E(5,l)}
\end{theorem}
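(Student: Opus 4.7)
The plan is to split the proof into upper and lower bounds, with the dividing line $\ell = 8$ marking the transition from the geometric lemmas about triangular convex hulls (Section \ref{sec:triangle}) to Harborth's bound $H(5)=10$.

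For the upper bounds, I first handle $\ell \in \{6,7\}$: let $S$ be a set of $9$ points in general position and case-split on $|CH(S)|$. If $|CH(S)| \geq 4$, Theorem \ref{th:5gon9} produces a $5$-hole. If $|CH(S)| = 3$, then $|\tilde{\bbI}(CH(S))| = 6$, which exceeds the thresholds of Lemma \ref{lm2} and Theorem \ref{lm:e7}, yielding an empty $6$-pseudo-triangle and an empty $7$-pseudo-triangle respectively. Hence $E(5,6), E(5,7) \leq 9$. For $\ell \geq 8$ the bound $E(5,\ell) \leq 10$ is immediate from $H(5)=10$, since any $10$ points contain a $5$-hole.

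For the lower bounds, I would exhibit extremal sets. The $8$-point configuration in Figure \ref{fig:example_lower}(b) has no $5$-hole and no empty $6$- or $7$-pseudo-triangle, so $E(5,6), E(5,7) \geq 9$. For $\ell = 8$ the $9$-point configuration in Figure \ref{fig:example_lower}(c) gives $E(5,8) \geq 10$ directly. For $\ell \geq 10$, any $9$-point set that avoids a $5$-hole (which exists because $H(5)=10$) trivially avoids an $\ell$-pseudo-triangle, since such a polygon would need $\ell > 9$ vertices.

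The delicate case is $\ell = 9$: one must show the $9$-point set of Figure \ref{fig:example_lower}(c) also admits no empty $9$-pseudo-triangle. An empty $9$-pseudo-triangle would have to use all $9$ points, with the three hull vertices as its convex vertices (since $|CH| = 3$ once a $5$-hole is forbidden, by Theorem \ref{th:5gon9}) and all six interior points distributed on the three concave side chains. By Observation \ref{ob:pt} we may assume this pseudo-triangle is standard, so the three chain lengths $(i,j,k)$ satisfy $i+j+k = 6$ with $i,j,k \geq 1$; in every such triple some chain has at least two reflex vertices. The plan is then to mimic the vertex-removal argument of Observation \ref{ob:mm}: delete an appropriate reflex vertex on the longest chain and reconnect its neighbors, verifying by a short case analysis on the local angular configuration that the resulting octagon is still a pseudo-triangle. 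This would produce an empty $8$-pseudo-triangle contained in the original $9$-point set, contradicting the defining property of Figure \ref{fig:example_lower}(c).

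The main obstacle is this reduction for $\ell = 9$; unlike the mountain case of Observation \ref{ob:mm}, after removing a reflex vertex of a standard pseudo-triangle one must check that the newly formed edge does not destroy the reflex status of the adjacent vertex and does not cross the opposite chains. I expect this to be a finite check over the cases $(i,j,k) \in \{(1,1,4),(1,2,3),(2,2,2)\}$ and their rotations, handled by choosing the removed vertex to be an interior (non-endpoint) reflex vertex of a chain of length $\geq 2$, so that concavity of the chain is preserved locally.
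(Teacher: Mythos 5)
Your upper-bound arguments and your lower-bound witnesses coincide with the paper's proof: the case split on $|CH(S)|$ for a $9$-point set using Theorem \ref{th:5gon9} together with Lemma \ref{lm2} and Theorem \ref{lm:e7}, the bound $E(5,\ell)\leq H(5)=10$ for $\ell\geq 8$, and the extremal configurations of Figure \ref{fig:example_lower}(b) and (c). The paper simply asserts that the set of Figure \ref{fig:example_lower}(c) contains no empty $\ell$-pseudo-triangle for every $\ell\geq 8$; you are right that $\ell=9$ is the only value that needs an argument beyond the figure's stated property (your counting remark disposes of $\ell\geq 10$), but the reduction you propose for it does not work.

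The problem is that deleting an interior reflex vertex $v$ of a concave side chain and joining its neighbours $u,w$ does not shrink the pseudo-triangle --- it enlarges it. Since the interior angle at $v$ exceeds $180^{\circ}$, the triangle $uvw$ lies on the exterior side of the boundary at $v$ (in the pocket between the concave chain and the segment joining the two convex vertices it connects), so the new polygon is the old one \emph{union} the triangle $uvw$, and $v$ lands in the interior of the resulting $8$-gon. The $8$-pseudo-triangle you produce is therefore not empty, and the property ``no empty $8$-pseudo-triangle'' of Figure \ref{fig:example_lower}(c) yields no contradiction. (This is precisely why Observation \ref{ob:mm} reduces mountains by deleting a \emph{convex} vertex $b$ or $c$, never a reflex chain vertex, and why the paper proves no general statement that an empty $\ell$-pseudo-triangle contains an empty $(\ell-1)$-pseudo-triangle.) Note that for $\ell=9$ every $9$-pseudo-triangle on a $9$-point set is vacuously empty, so what must be shown is that the configuration admits no pseudo-triangular polygonization at all. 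A route consistent with the paper's own tightness argument after Theorem \ref{lm:e7} is: a $9$-pseudo-triangle here must have a side chain with at least two interior vertices, whose pocket is empty, producing an empty convex quadrilateral with two consecutive vertices on $CH(S)$; one then checks directly that the configuration of Figure \ref{fig:example_lower}(c) excludes such a quadrilateral.
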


\begin{proof}The set of 8 points shown in Figure \ref{fig:example_lower}(b) contains no 5-hole and no
empty 6- or 7-pseudo-triangle.
This implies that $E(5, 6)> 8$ and $E(5, 7)> 8$.

Now, consider a set $S$ of 9 points in general position.
It follows from Theorem \ref{th:5gon9} that $S$ contains a 5-hole whenever
$|CH(S)|\geq 4$. Now, if $|CH(S)|=3$, then $|\tilde{\bbI}(CH(S))|\geq 6$, and the existence of an empty 6-pseudo-triangle and an empty 7-pseudo-triangle in $S$ follows from Lemma \ref{lm2} and Lemma \ref{lm:e7_lemma}, respectively. Therefore, $E(5, 6)\leq 9$ and $E(5, 7)\leq 9$, and together with the lower bound it implies that $E(5, 6)=E(5, 7)=9$.

We know that for $\ell\geq 3$, $E(5, \ell)\leq H(5)=10$, since every set of 10 points in general position contains a 5-hole. The set of 9 points shown in Figure \ref{fig:example_lower}(c) contains no 5-hole and no empty $\ell$-pseudo-triangle for $l\geq 8$. This implies that for $\ell\geq 8$, $E(5, \ell)=10$. \hfill $\Box$
\end{proof}

\subsection{$E(k, 6)$}

In Lemma \ref{lm2} it was proved that any set $S$ of points in the plane in general position with $|CH(S)|=3$ and $|\tilde{\bbI}(CH(S))|\geq 3$ contains an empty standard 6-pseudo-triangle. This implies that $E(k, 6)=N(2, k)\leq 2^{{{k+2}\choose{4}}-1}+1$, since any 2-convex point set cannot contain a 6-pseudo-triangle.

In the special case when $k=6$ we can obtain better bounds. For this reason, we need the following technical lemma:

%%%%%%%%%%%%%%%%%%%%%%%%%%%%%%%%%%%%%%%%%%%%%%%%%%%%%%%%%%%%%%%%%%%%%%%%%%%%%%%%
\begin{figure*}[h]
\centering
\begin{minipage}[c]{0.33\textwidth}
\centering
\includegraphics[width=1.8in]
    {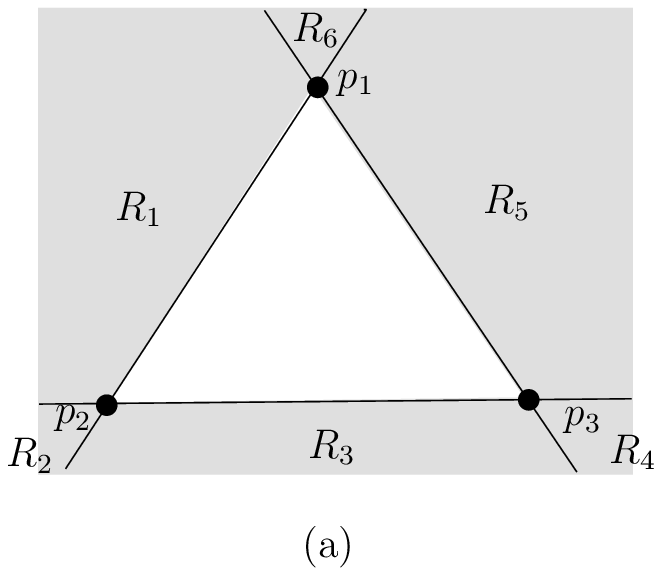}\\
%{\small (a) Forbidden zone of a pentagon $P$.}\\
\end{minipage}%
%\hspace{-0.25in}
\begin{minipage}[c]{0.3\textwidth}
\centering
\includegraphics[width=1.7in]
    {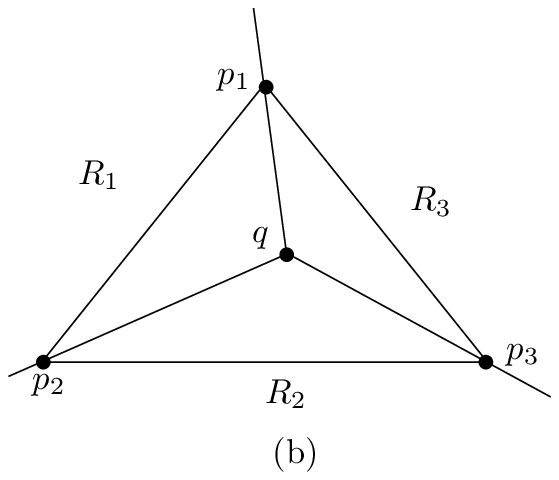}\\
\end{minipage}
%\hspace{-0.25in}
\begin{minipage}[c]{0.33\textwidth}
\centering
\includegraphics[width=1.64in]
    {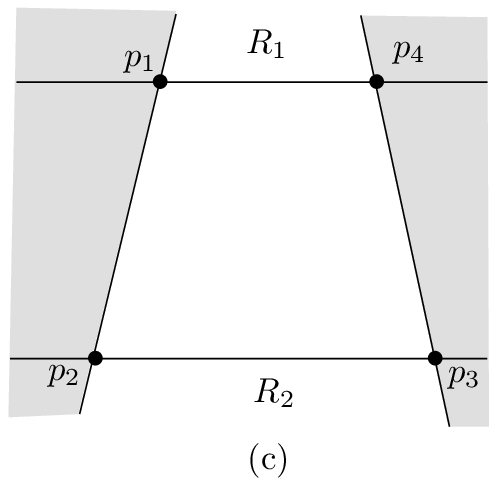}\\
\end{minipage}
%\hspace{-0.25in}
\caption{Illustration for the proof of Lemma \ref{lm:lm3}: (a) $|\tilde{\bbI}(CH(Z))|=3$, (b) $|\tilde{\bbI}(CH(Z))|=4$ and $|L\{2, Z\}|=3$, and (c) $|\tilde{\bbI}(CH(Z))|=4$ and $|L\{2, Z\}|=4$.}
\label{fig3II}
\end{figure*}
%%%%%%%%%%%%%%%%%%%%%%%%%%%%%%%%%%%%%%%%%%%%%%%%%%%%%%%%%%%%%%%%%%%%%%%%%%%%%%%%%%%%%%%%%%%%%%%%%%%%%%%%%%

\begin{lemma}If $Z$ is a set of points in the plane in general position, with $|CH(Z)|\geq 8$ and $|\tilde{\bbI}(CH(Z))|\leq 4$, then $Z$ contains a 6-hole.
\label{lm:lm3}
\end{lemma}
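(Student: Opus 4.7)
I will proceed by case analysis on $k:=|\tilde{\bbI}(CH(Z))|\in\{0,1,2,3,4\}$, further split when $k=4$ by $|L\{2,Z\}|\in\{3,4\}$, matching the three sub-cases depicted in Figure~\ref{fig3II}. In every case the 6-hole will be built from a chain of consecutive hull vertices joined with one, two, three, or four interior points; emptiness will be verified each time by showing that no leftover point of $Z$ lies in the triangular or polygonal ``cap'' adjoined to the pentagon of hull vertices.

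For $k=0$ any six consecutive hull vertices suffice. For $k=1$ with interior point $p$, I would fix a long diagonal $v_iv_{i+4}$ (available because $|CH(Z)|\ge 8$); it places $p$ strictly on one of its two sides, so the five consecutive hull vertices on the opposite side together with $p$ yield a convex hexagon. This hexagon is empty because the remaining hull vertices---being extreme in $CH(Z)$---cannot enter the thin triangle $\triangle v_iv_{i+4}p$ (whose interior lies in the interior of $CH(Z)$). For $k=2$ I would instead use the line through $p$ and $q$; it partitions the hull into two arcs, at least one of which contains four consecutive hull vertices, and these four together with $p$ and $q$ give the 6-hole by the same thin-triangle argument.

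For $k=3$ (Figure~\ref{fig3II}(a)) the three interior points form a triangle $pqr$. For each edge $e_i$ of $pqr$ I will let $c_i$ count the hull vertices lying on the side of $\overline{e_i}$ opposite the third interior point. Since every hull vertex sits in one of the six exterior regions of $pqr$ (three edge-strips, each giving one clean-side membership, and three vertex-wedges, each giving two), $c_1+c_2+c_3\ge|CH(Z)|\ge 8$, so some edge---say $pq$---will have at least three consecutive hull vertices on its clean side. These three together with $p,q,r$ form a convex hexagon whose lower half is the triangle $\triangle pqr$; $r$ is in the correct half-plane by construction, and every unused hull vertex sits outside this triangular cap because the cap lies in the interior of $CH(Z)$.

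The case $k=4$ bifurcates. When $|L\{2,Z\}|=3$ (Figure~\ref{fig3II}(b)) a fourth interior point $s$ sits strictly inside $\triangle pqr$; I would adapt the $k=3$ construction by replacing $r$ with $s$ as the hexagon's sixth vertex, and since the line $\overline{ps}$ drawn inside $\triangle pqr$ separates $r$ from $q$, the point $r$ ends up outside the new cap $\triangle pqs$. When $|L\{2,Z\}|=4$ (Figure~\ref{fig3II}(c)) the interior points form a convex quadrilateral $pqrs$; the analogous counting over four edge-strips and four corner-wedges gives $c_1+c_2+c_3+c_4\ge 8$. If $\max c_i\ge 3$ I would again use the ``3 hull + $\{p,q,r\}$'' recipe, with $s$ excluded because the diagonal $\overline{pr}$ of the inner quadrilateral separates $q$ from $s$; if instead $c_i=2$ for every edge, I would take the two hull vertices on some clean side together with all four interior points, producing a convex hexagon whose lower half is precisely the inner quadrilateral $pqrs$ and which is therefore automatically empty of every remaining hull vertex.

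\medskip\noindent\emph{Main obstacle.} The delicate step throughout is the emptiness verification after adjoining interior vertices to the pentagon of hull vertices: I must rule out that any leftover point of $Z$ slips into the attached cap. It is precisely this check that forces the $k=4$ case to be split according to $|L\{2,Z\}|$, since a fourth interior point strictly inside the outer triangle plays a fundamentally different role from the fourth vertex of a convex inner quadrilateral. Inside the quadrilateral sub-case the pigeonhole only forces a ``$3+3$'' construction when some $c_i\ge 3$, so the ``$2+4$'' fallback is needed in the tightest configuration $c_1=c_2=c_3=c_4=2$; carrying the whole scheme out when $|CH(Z)|$ equals the boundary value $8$ is the technical heart of the argument.
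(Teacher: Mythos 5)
Your overall plan (a case analysis on $k=|\tilde{\bbI}(CH(Z))|$ with the case $k=4$ split by the size of the second convex layer) is the same as the paper's, and your treatment of $k\leq 2$ is correct. The genuine gap is in the cases $k=3$ and $k=4$: three hull vertices lying merely in the open half-plane bounded by an edge $pq$ of the inner triangle on the side away from $r$ do \emph{not} in general form a convex hexagon together with $p$, $q$, $r$. Convexity of the hexagon $rpv_1v_2v_3q$ at the vertex $p$ requires $v_1$ to lie on the $q$-side of the line $rp$, i.e.\ in the edge strip $\overline{\bbH}(pq,r)\cap Cone(prq)$, and fails if $v_1$ sits in the vertex wedge at $p$. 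Concretely, take $p=(0,0)$, $q=(1,0)$, $r=(0.5,-1)$ and hull vertices $v_1=(-10,0.1)$, $v_2=(0.5,5)$, $v_3=(11,0.1)$: all three $v_i$ are on the clean side of $pq$, yet $p$ lies in the interior of the convex hull of these six points, so they are not in convex position. Your pigeonhole $c_1+c_2+c_3\geq 8$ only yields $\max_i c_i\geq 3$, which this example shows is not sufficient for your construction.

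The paper closes exactly this hole by running two families of inequalities simultaneously over the six-region partition of the exterior of the second layer: the edge-strip counts ($|R_1|,|R_3|,|R_5|\leq 2$, or else the ``$3$ hull $+$ $3$ inner'' hexagon exists and is genuinely convex because the three hull points lie in the restricted strip) and the half-plane counts ($|R_6|+|R_1|+|R_2|\leq 3$, etc., or else four consecutive hull vertices together with $p$ and $q$ alone already give a $6$-hole). Summing the half-plane inequalities forces at most one hull vertex into the three vertex wedges, and adding the strip inequalities caps the total at $7<8$, the desired contradiction. The same defect recurs in both sub-cases of $k=4$: your ``$3+3$'' and ``$2+4$'' recipes again need the hull vertices to lie in restricted cones or strips rather than in half-planes, and the fallback configuration $c_1=c_2=c_3=c_4=2$ does not address this. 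So the counting idea is right in spirit, but the decisive step --- showing that the strip bounds and the half-plane bounds cannot all hold when $|CH(Z)|=8$ --- is missing, and without it the claimed hexagons need not be convex.
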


\begin{proof} To prove the lemma it is sufficient to prove the theorem for $|CH(Z)|=8$, since every convex 9-gon can be reduced to a convex 8-gon with at most as many interior points.

If $|\tilde{\bbI}(CH(Z))|=1$, then a 6-hole can be obtained easily. Now, if  $|\tilde{\bbI}(CH(Z))|=2$, then the line joining these two points divides the plane into two half planes one of which must contain at least four points of $\bbV(CH(Z))$. These 4 points together with the two points in $\tilde{\bbI}(CH(Z))$ form a 6-hole.

The remaining two cases are considered separately as follows:

\begin{description}
\item[{\it Case} 1:] $|\tilde{\bbI}(CH(Z))|=3$. Consider the partition of the
exterior of the triangle formed in the second convex layer into disjoint regions $R_i$ as shown in Figure \ref{fig3II} (a). Clearly, $Z$ contains 6-hole, unless the following inequalities hold:

\begin{equation}
| R_{1} | \leq 2, \hspace{1cm} | R_{3} | \leq 2, \hspace{1cm} | R_{5} |\leq 2,
\label{eq:EN2}
\end{equation}
\vspace{-0.25in}
\begin{eqnarray}
| R_{6} | + | R_{1} | + | R_{2} | & \leq & 3, \nonumber \\
| R_{2} | + | R_{3} | + | R_{4} | & \leq & 3, \nonumber \\
| R_{4} | + | R_{5} | + | R_{6} | & \leq & 3.
\label{eq:EN3}
\end{eqnarray}

Summing the inequalities of (\ref{eq:EN3}) and using the fact
$|\bbV(CH(Z))|= 8$ we get {$ |R_{2}| + | R_{4}| + | R_{6}|  \leq 1$}.
Adding this inequality to those from (\ref{eq:EN2}) we get
$\sum_{i=1}^6| R_{i} |\leq 7 < 8=|\bbV(CH(Z))|$, a contradiction.

\item[{\it Case} 2:] $|\tilde{\bbI}(CH(Z))|=4$. We have the following two subcases based on the size of the second layer.
\begin{description}
\item[{\it Case} 2.1:]$|L\{2, Z\}|=3$. Then $|L\{3, Z\}|=1$. Consider the partition of the exterior of $CH(L\{2, Z\})$ into three disjoint regions $R_i$ as shown in Figure \ref{fig3II}(b). Clearly, $S$ contains a 6-hole whenever $|R_i|\geq 3$, for $i\in \{1, 2, 3\}$. Otherwise, $|R_1|+|R_2|+|R_3|\leq 6< 8=|\bbV(CH(Z))|$, a contradiction.
\item[{\it Case} 2.2:]$|L\{2, Z\}|=4$. Let $L\{2, Z\}=\{p_1, p_2, p_3, p_4\}$ be the vertices of the second layer taken in counter-clockwise order. Let $R_1$ and $R_2$ be the shaded regions as shown in Figure \ref{fig3II}(c). It is easy to see that $S$ contains a 6-hole unless $|R_1|+|R_2|\leq 1, |\overline\mathcal H(p_1p_2, p_3)\cap S|\leq 3$, and $|\overline\mathcal H(p_3p_4, p_1)\cap S|\leq 3$. Summing these three inequalities, we get $|\bbV(CH(Z))|\leq 7<8$, a contradiction. \hfill $\Box$
\end{description}
\end{description}
\end{proof}

Using this lemma we prove the following theorem:

%%%%%%%%%%%%%%%%%%%%%%%%%%%%%%%%%%%%%%%%%%%%%%%%%%%%%%%%%%%%%%%%%%%%%%%%%%%%%%%%
\begin{figure*}[h]
\centering
\begin{minipage}[c]{0.5\textwidth}
\centering
\includegraphics[width=2.35in]
    {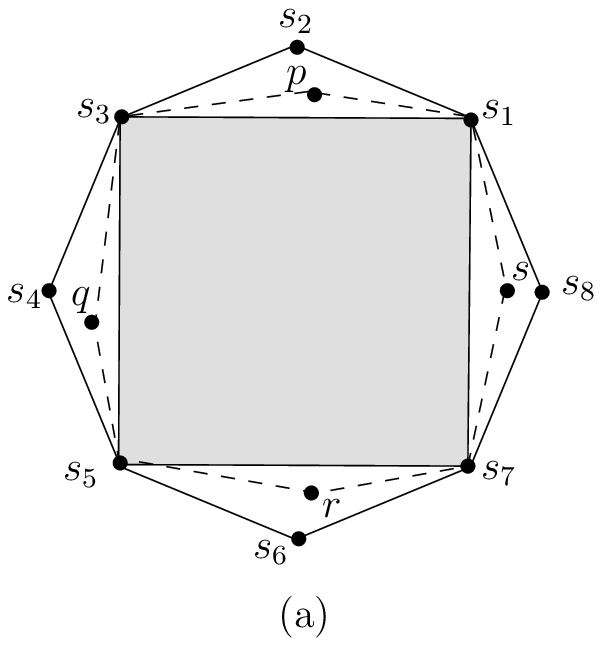}\\
%{\small (a) Forbidden zone of a pentagon $P$.}\\
\end{minipage}%
\begin{minipage}[c]{0.5\textwidth}
\centering
\includegraphics[width=2.35in]
    {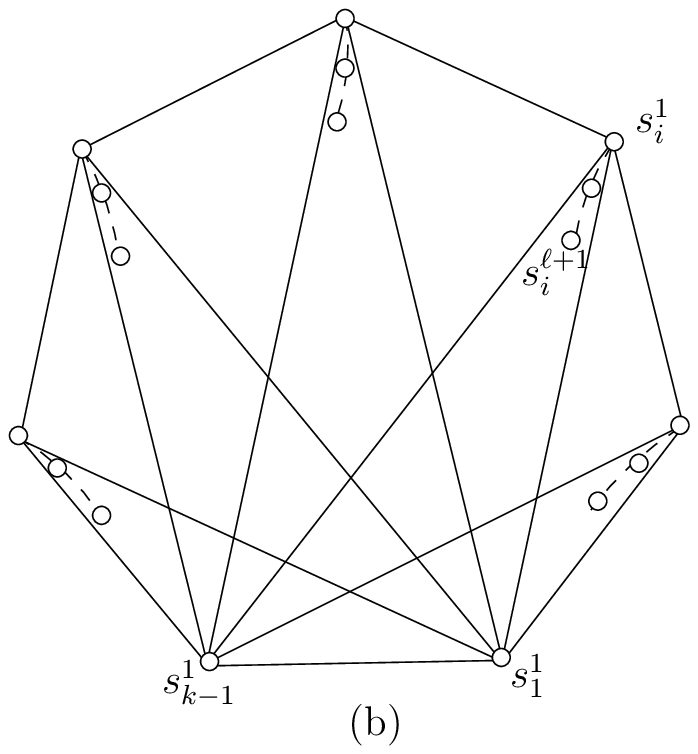}\\
\end{minipage}
\caption{(a) Illustration for the proof of Theorem \ref{th4} and (b) Illustration for the proof of Lemma \ref{lm:bt}.}
\label{fig:e(6,6)}
\end{figure*}
%%%%%%%%%%%%%%%%%%%%%%%%%%%%%%%%%%%%%%%%%%%%%%%%%%%%%%%%%%%%%%%%%%%%%%%%%%%%%%%%%%%%%%%%%%%%%%%%%%%%%%%%%%

\begin{theorem}
$12 \leq E(6, 6) \leq 18$.
\label{th4}
\end{theorem}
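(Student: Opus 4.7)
The plan is to prove the lower bound $E(6,6)\geq 12$ by exhibiting an $11$-point configuration in general position with no $6$-hole and no empty $6$-pseudo-triangle (verifiable by enumeration of its ${{11}\choose{6}}$ sextuples), and to prove the upper bound $E(6,6)\leq 18$ by contradiction using the structural results already developed.

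Let $S$ be a set of $18$ points in general position, and assume $S$ contains neither a $6$-hole nor an empty $6$-pseudo-triangle. The first step is to show that $S$ must be $2$-convex, i.e.\ every triangle spanned by three points of $S$ contains at most two further points of $S$. Indeed, if some triangle $\triangle abc$ with $a,b,c\in S$ contained $k\geq 3$ points of $S$ in its interior, then the subset $S':=\{a,b,c\}\cup(\tilde{\bbI}(\triangle abc)\cap S)$ would satisfy $|CH(S')|=3$ and $|\tilde{\bbI}(CH(S'))|=k\geq 3$, so Lemma~\ref{lm2} would yield an empty standard $6$-pseudo-triangle $\mathcal{P}$ in $S'$. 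Since $\bbI(\mathcal{P})\subseteq\bbI(\triangle abc)$ and every point of $S\setminus S'$ lies outside $\triangle abc$, the pseudo-triangle $\mathcal{P}$ is actually empty in $S$, a contradiction.

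Next, fan-triangulating $CH(S)$ from any hull vertex yields $|CH(S)|-2$ triangles whose interiors partition $\bbI(CH(S))$, and $2$-convexity gives
\begin{equation*}
18-|CH(S)|\;=\;|\tilde{\bbI}(CH(S))|\;\leq\;2\bigl(|CH(S)|-2\bigr),
\end{equation*}
forcing $|CH(S)|\geq 8$. If in addition $|\tilde{\bbI}(CH(S))|\leq 4$, Lemma~\ref{lm:lm3} produces a $6$-hole directly, so it remains to handle the intermediate range $8\leq|CH(S)|\leq 13$ with $5\leq|\tilde{\bbI}(CH(S))|\leq 10$. This is the main obstacle. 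The plan for this range is to peel the outer layer and analyse the second convex layer $L\{2,S\}$, using $2$-convexity to strongly constrain the number of points of $S$ inside any sub-polygon spanned by $S$. In each configuration I would aim either (i) to locate a triangle spanned by $S$ containing three further points of $S$, which, via the reduction above, yields an empty $6$-pseudo-triangle, or (ii) to isolate a convex sub-polygon on at least eight vertices of $S$ whose interior contains at most four points of $S$, to which Lemma~\ref{lm:lm3} applies and yields a $6$-hole. The trickiest sub-case is the extreme $|CH(S)|=8$ with exactly $10$ interior points; here, as suggested by Figure~\ref{fig:e(6,6)}(a), partition-style inequalities in the spirit of the proof of Lemma~\ref{lm:lm3} must be balanced carefully to force the desired contradiction.
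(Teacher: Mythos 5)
Your reduction to $2$-convexity and the bound $18-|CH(S)|\leq 2(|CH(S)|-2)$, forcing $|CH(S)|\geq 8$, are correct and match the first half of the paper's argument (the paper phrases it as: if $|CH(S)|\leq 7$, some triangle of a fan triangulation has at least $\lceil\frac{18-k}{k-2}\rceil\geq 3$ interior points, and Lemma~\ref{lm2} applies). But the proof then stops exactly where the real work begins. For $8\leq|CH(S)|\leq 13$ you offer only a plan, and note that under your standing assumption $S$ is already $2$-convex, so your option (i) --- finding a triangle with three interior points --- is vacuous; everything rests on option (ii), producing a convex polygon on at least eight points of $S$ with at most four interior points, and you give no construction for it. A second-layer analysis in the style of Lemma~\ref{lm:lm3} across six hull sizes and up to ten interior points is not a routine completion, so as written this is a genuine gap, not a deferrable detail. (A smaller issue: for the lower bound you should exhibit or cite the $11$-point configuration; the paper invokes the order-type enumeration of Aichholzer et al.)

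The missing idea is a specific shrinking device that covers the whole range at once. First truncate $CH(S)$ to a convex octagon $s_1s_2\ldots s_8$ spanned by points of $S$ without increasing the number of interior points (possible whenever $|CH(S)|\geq 8$). By $2$-convexity the quadrilateral $s_1s_3s_5s_7$ contains at most four points of $S$, since otherwise one of its two triangles would contain at least three. Now let $p$ be the nearest neighbour of the segment $s_1s_3$ in $\mathcal H(s_1s_3,s_2)\cap S$ (possibly $p=s_2$), and define $q,r,s$ analogously for $s_3s_5$, $s_5s_7$, $s_7s_1$. The octagon $s_1ps_3qs_5rs_7s$ is convex and spanned by $S$, the four triangles $s_1ps_3$, $s_3qs_5$, $s_5rs_7$, $s_7ss_1$ are empty by the nearest-neighbour choice, so all of its interior points lie in $s_1s_3s_5s_7$ and number at most four; Lemma~\ref{lm:lm3} then yields a $6$-hole. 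This single construction replaces the case analysis you were planning over $|CH(S)|$ and the second convex layer.
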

\begin{proof}Using the order-type database, Aichholzer et al. \cite{toth} obtained a set of 11 points that contains neither a convex hexagon nor a 6-pseudo-triangle. This implies that $E(6, 6)\geq 12$.

Consider a set $S$ of 18 points in
general position. Suppose $|CH(S)|=k \leq 7$ and partition $CH(S)$ into $k-2$ triangles
whose vertex set is $\bbV(CH(S))$. Since there are $18-k$ points inside $CH(S)$, there exists a triangle
which has at least $\lceil\frac{18-k}{k-2}\rceil$ points of $S$ inside
it. Observe that $\lceil\frac{18-k}{k-2}\rceil \geq 3$, since $k\leq 7$. Therefore, if $|CH(S)|\leq 7$, then it is possible to find a triangle with at least three interior points and, according to Lemma \ref{lm2}, there exists an empty 6-pseudo-triangle.

Next, suppose that $|CH(S)|=8$. Let $\bbV(CH(S))=\{s_1, s_2, \ldots, s_8\}$, where the vertices are taken in counter-clockwise order. If $|\bbI(s_1s_3s_5s_7)\cap S|\geq 5$, one can find a triangle with at least three interior points and, according to Lemma \ref{lm2}, there is an empty 6-pseudo-triangle. Therefore, suppose that $|\bbI(s_1s_3s_5s_7)\cap S|\leq 4$. Let $p$ be the nearest neighbor of the line segment $s_1s_3$ in $\mathcal H(s_1s_3, s_2)\cap S$. Note that $p$ can be the same as $s_2$, if $\bbI(s_1s_2s_3)\cap S$ is empty. Similarly, let $q, r, s\in S$ be the nearest neighbors of the line segments $s_3s_5$, $s_5s_7$, and $s_7s_1$, respectively (see Figure \ref{fig:e(6,6)}). Observe that the convex octagon $s_1ps_3qs_5rs_7s$ can have at most four points of $S$ inside it. By Lemma \ref{lm:lm3} this convex octagon always contains a 6-hole.

Finally, if $|CH(S)|\geq 9$, then $CH(S)$ can be reduced to a convex octagon with at most as many interior points, and we can apply the same argument as before. Therefore, we have $E(6, 6)\leq 18$.
\hfill $\Box$
\end{proof}

\noindent{\it Remark} 1: Using the order type data-base Aichholzer et al. \cite{toth} observed that there exist precisely 9 out of over 2.33 billion realizable order types of 11 points which contain neither a convex hexagon nor a pseudo-triangle with 6 vertices. Experimenting with Overmars' {\it empty 6-gon program} \cite{overmarsdcg} we were unable to find a set of 12 points which contains no 6-hole and empty 6-pseudo-triangle. In fact, it follows from Lemma \ref{lm2} and the proof of Theorem \ref{th4} that a set $S$ of 12 points contains an empty 6-pseudo-triangle or a 6-hole whenever $|CH(S)|\leq 5$ or $|CH(S)|\geq 8$. Therefore, a set of 12 points without a 6-hole or an empty 6-pseudo-triangle must have $|CH(S)|=6$ or $|CH(S)|=7$.  Although we were unable to geometrically show the existence of a 6-hole or an empty 6-pseudo-triangle in these two cases, experimental evidence motivates us to conjecture that $E(6, 6)=12$. We believe that a very detailed analysis for the different cases that arise when $|CH(S)|$ is either 6 or 7, or some computer-aided enumeration method might be useful in settling the conjecture.

\subsection{Other Improvements and Remarks}
\label{sec:e(k,l)_other}

We now turn our attention to $E(6, \ell)$. Clearly, $E(6, \ell)\leq H(6)$ and $E(6, \ell)\geq N(\ell-4, 6)$, since an $(\ell-4)$-convex set cannot contain an $\ell$-pseudo-triangle. However, when $\ell=7$ we can obtain a better upper bound $E(6, 7)\leq 33$ using Theorem \ref{lm:e7} and a result of Gerken \cite{gerken} which says that any set which contains a 9-gon contains a 6-hole. Consider a set $S$ of 33 points in the plane in general position. Then if $|CH(S)|\geq 9$, $S$ contains a 6-hole and so we can assume that $|CH(S)|=k\leq 8$. $CH(S)$ can be partitioned into $k-2$ triangles whose vertex set is exactly $\bbV(CH(S))$. Since $|\tilde{\bbI}(CH(S)|=33-k$, one of these $k-2$ triangles contains at least $\lceil\frac{33-k}{k-2}\rceil$ interior points. As $k\leq 8$, we have $\lceil\frac{33-k}{k-2}\rceil \geq 5$, and the existence of an empty 7-pseudo-triangle in $S$ follows from Theorem \ref{lm:e7}.\\

\noindent{\it Remark} 2: Note that Theorem \ref{lm:e7} gives a proof of the existence of $E(7, 7)$, which does not use
Theorem \ref{th:valtr}. Valtr's result \cite{kconvexvaltr,valtropencupscaps} implies that any
4-convex set without a 7-hole has at most $N(4,7)-1$ points. So using Theorem \ref{lm:e7} we obtain $E(7, 7)\leq N(4,7)$. Moreover, a three convex set cannot contain a 7-pseudo-triangle, which implies that $E(7, 7)\geq N(3, 7)$.

If one can show that for every integer $\ell\geq 3$ there exists a smallest integer $\Delta(\ell)$ such that any triangle with more than  $\Delta(\ell)$ interior points contains an empty $\ell$-pseudo-triangle, then from Valtr's $\Delta(\ell)$-convexity result it will follow that $E(k, \ell)\leq N(\Delta(\ell), k)$.\\

The bounds obtained on the values $E(k, 5), E(5, \ell), E(k, 6)$, and $E(6, \ell)$ for different values of $k$ and $\ell$ are summarized in Table \ref{table:e(k,l)}.

\begin{table}[h]
\caption{Bounds on $E(k, \ell)$}
\centering
%\vspace{-0.1in}
\begin{tabular}{c}
\hline
$E(k, 5)=M_k:=\left\{
  \begin{array}{ll}
    2^{(k+1)/2}-1, & \hbox{for $k\geq 3$ odd;} \\
    \frac{3}{2}2^{k/2}-1, & \hbox{for $k\geq 4$ even.}
  \end{array}
\right.$\\
\hline
\hline
$E(5, \ell)=\left\{
  \begin{array}{ll}
                 3 &\hbox{for $\ell=3$,}\\
                 4 &\hbox{for $\ell=4$,}\\
                 7 &\hbox{for $\ell=5$,}\\
                 9 &\hbox{for $\ell=6$,}\\
                 9 &\hbox{for $\ell=7$,}\\
                 10 &\hbox{for $\ell\geq 8$.}
               \end{array}
\right.$\\
\hline
\hline
$E(k, 6)=N(2, k)= \left\{
  \begin{array}{ll}
                 3 &\hbox{for $k=3$,}\\
                 5 &\hbox{for $k=4$,}\\
                 9 &\hbox{for $k=5$,}\\
                 \left[12, 18\right] &\hbox{for $k=6$,}
                 %\left[0, 2^{{{k+2}\choose{4}}-1}+1\right] &\hbox{for $k\geq 7$.}
               \end{array}
\right.$\\
\hline
\hline
$E(6, \ell)=\left\{
  \begin{array}{ll}
                 3 &\hbox{for $\ell=3$,}\\
                 4 &\hbox{for $\ell=4$,}\\
                 7 &\hbox{for $\ell=5$,}\\
                 \left[12, 18\right] &\hbox{for $\ell=6$,}\\
                 \left[N(3, 6), 33\right] &\hbox{for $\ell=7$,}\\
                 \left[N(\ell-4, 6), H(6)\right] &\hbox{for $\ell\geq 8$.}
               \end{array}
\right.$\\
\hline
\end{tabular}
\label{table:e(k,l)}
\end{table}

%\begin{figure}[h]\vspace{-0.1in}
%\psfig{file=seven_example.eps,width=5.8in}
%\caption{(a) Triangle with 4 interior points and no 7-pseudo-triangle, (b) 6 points without a convex hexagon or 5-pseudo-triangle, and (c) 12 points without a convex heptagon or a 7-pseudo-triangle.}
%\label{fig:seven_example}\vspace{-0.2in}
%\end{figure}

\section{$F(k, \ell)$}
\label{sec:f(k,l)}

In the previous sections we have discussed the existence of
{\it empty} convex polygons or pseudo-triangles in point sets. If
the empty condition is dropped, we get another related quantity
$F(k, \ell)$, which we define as the smallest integer such that any set of at least
$F(k, \ell)$ points in the plane, in general position, contains a convex
$k$-gon or a $\ell$-pseudo-triangle. From the Erd\H os-Szekeres theorem
it follows that $F(k, \ell)\leq ES(k)$ for all $k, \ell \geq 3$. Obtaining bounds on $F(k, \ell)$ is also an interesting problem. Aichholzer et al. \cite{toth} showed that $F(6, 6)=12$. Moreover, Aichholzer et al. \cite{toth}
claim that $21 \leq F(7, 7)\leq 23$. In this section, we extend a result of Bisztriczky and Fejes T\'oth \cite{fejestoth}, and obtain the exact values of $F(k, 5)$ and $F(k, 6)$, and non-trivial bounds on $F(k, 7)$.

Bisztriczky and Fejes T\'oth \cite{fejestoth} proved that any $\ell$-convex point set with at least $(k - 3)(\ell +1)+3$ points, not necessarily in general position, contains a convex $k$-gon and the bound is tight. This means that there exists a set of $(k - 3)(\ell +1)+2$ points, not necessarily in general position, which is $\ell$-convex but has no convex $k$-gon.

In the following lemma we generalize the construction of Bisztriczky and Fejes T\'oth \cite{fejestoth} to obtain a set of $(k - 3)(\ell +1)+2$ points {\it in general position} which is $\ell$-convex but has no convex $k$-gon, if $k<\ell/2$.

\begin{lemma}Let $k, \ell$ denote natural numbers such that $k\geq 3$ and $\ell< k/2$. Any $\ell$-convex set of at
least $(k - 3)(\ell +1)+3$ points in the plane in general position contains $k$ points in convex position, and this bound is tight.
\label{lm:bt}
\end{lemma}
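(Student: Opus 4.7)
The plan is to prove the two halves separately. For the upper bound, I would invoke the original theorem of Bisztriczky and Fejes T\'oth directly: they prove that \emph{every} $\ell$-convex point set (not necessarily in general position) of size $(k-3)(\ell+1)+3$ contains $k$ points in convex position. Since a set in general position is in particular an arbitrary set, the upper bound in Lemma \ref{lm:bt} is immediate.

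For tightness, the goal is to exhibit a set of $(k-3)(\ell+1)+2$ points \emph{in general position} that is $\ell$-convex and has no convex $k$-gon. My approach is to take the Bisztriczky--Fejes T\'oth extremal configuration as a starting template and perturb it into general position. Their construction has $k-2$ distinguished ``cluster lines'': two extreme vertices, and $k-3$ segments, each of which carries $\ell+1$ collinear points arranged so that (a) every triangle spanned by the configuration contains at most $\ell$ points (this is exactly $\ell$-convexity) and (b) any subset in convex position uses at most one point from each segment plus the two extreme vertices, giving at most $k-1$ convex-position points. I would replace each $\ell+1$ collinear points by $\ell+1$ points lying on a very flat convex arc of width $\varepsilon>0$, while leaving the two extreme points fixed.

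The argument then has three verifications. First, for generic choice of the arcs the resulting set is in general position; this is routine. Second, for $\varepsilon$ small enough the $\ell$-convexity is preserved: every open triangle whose interior point-count was at most $\ell$ in the original template still contains at most $\ell$ points after perturbation, because the point-in-triangle predicate is open away from the finitely many algebraic ``coincidences'' corresponding to degeneracies, all of which are avoided by taking $\varepsilon$ smaller than the minimum distance between a point and the opposite side of any triangle it previously lay outside of. Third, and most delicately, no $k$ points are in convex position after perturbation. The key observation is that a convex subset of the perturbed set can include at most $\ell+1$ points from a single arc (all of them), and when such a selection is combined with points from other arcs the supporting convex polygon is forced to essentially use only two of the $k-2$ clusters in a nontrivial way; the remaining clusters contribute at most one point each, as in the original template. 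Counting gives at most $2(\ell+1) + (k-4) + 2 = k + 2\ell - 2$ candidates, but the local convexity constraints on each arc bring this down to at most $k-1$ under the hypothesis $\ell<k/2$.

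The main obstacle is precisely this third verification: one must show that the arcs, when inserted into the otherwise ``collinear'' template, do not combine across clusters to create a larger convex polygon than the template allowed. The hypothesis $\ell < k/2$ is used here exactly because two arcs together can contribute at most $2(\ell+1)$ convex-position points, and this must fall strictly below $k$ after accounting for the remaining clusters. Making this case analysis rigorous — enumerating the ways a convex polygon can pick multiple points from multiple arcs, and checking the count in each case — is the technical heart of the argument; once it is done, $\varepsilon$ is fixed small enough so that the perturbed configuration simultaneously satisfies general position, $\ell$-convexity, and the absence of a convex $k$-gon, completing the proof of tightness.
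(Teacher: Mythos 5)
Your first half is fine: the Erd\H os--Szekeres-type bound for general position does follow at once from the Bisztriczky--Fejes T\'oth theorem for arbitrary position, and this is even shorter than the paper's own three-line counting argument (triangulate the hull into $b-2\leq k-3$ triangles, each holding at most $\ell$ interior points). The tightness half, however, has genuine gaps. The paper does essentially what you propose -- it replaces each block of $\ell+1$ collinear points by $\ell+1$ points on a concave chain inside the triangle $s^1_{i-1}s^1_is^1_{i+1}$ -- but the correctness of the construction hinges on data you leave unspecified: the \emph{orientation} of each arc. In the paper, clusters $2,\dots,m$ curve toward $s^1_1$ and clusters $m+1,\dots,k-2$ curve toward $s^1_{k-1}$; if instead every arc bulged consistently with the ambient convex $(k-1)$-gon, essentially all $(k-3)(\ell+1)$ points could be placed in convex position and the construction would collapse. ``Very flat'' does not save you here; flatness controls nothing, orientation controls everything.

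Two further steps do not go through as written. First, your openness argument for preserving $\ell$-convexity only covers points that were strictly outside a triangle before perturbation; but in the degenerate template the dangerous points lie \emph{on} the boundary of such triangles (e.g.\ the $\ell-1$ middle points of a segment lie on the edge spanned by its endpoints), and after bending the segment they can fall into the interior. These boundary coincidences are not ``avoided by taking $\varepsilon$ small'' -- they are intrinsic to the starting configuration, which is why the paper verifies $\ell$-convexity from scratch by a three-case analysis on which clusters the triangle's vertices come from. Second, and most seriously, your final count $2(\ell+1)+(k-4)+2$ is at least $k$ for every $\ell\geq 1$, and the step that is supposed to bring it down to $k-1$ (``local convexity constraints on each arc'') is precisely the missing argument; you acknowledge as much. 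The paper's route is different and does close: using the chain orientations it shows that at most one cluster contributes three or more points, that a cluster contributing at least two points forbids one of $s^1_1,s^1_{k-1}$ and confines $\mathcal P$ to one side, and hence that $|\mathcal P|\leq \lceil k/2\rceil+\ell$, which is less than $k$ exactly because $\ell<k/2$. Until you supply the arc orientations and an analysis of that kind, the tightness claim is not proved.
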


\begin{proof}Consider an $\ell$-convex set $S$ of $(k - 3)(\ell +1)+3$ points in the plane in general position. Assume that $|CH(S)|=b\leq k-1$. Consider a triangulation of $CH(S)$ into $b-2$ triangles. Since $S$ is $\ell$-convex, this implies that $|S|\leq b+(b-2)\ell\leq (k-3)(\ell+1)+2$, which is a contradiction.

Now we construct an $\ell$-convex set $Z$ of $(k-3)(\ell+1)+2$ points in general position, which contains no convex $k$-gon. Refer to Figure \ref{fig:e(6,6)}(b). Let $s^1_{1}, s^1_{2}, \ldots, s^1_{k-1}$ be a set of $k-1$ points forming a convex $k-1$-gon ordered in counter-clockwise direction. Consider $Z=\{s^{j}_{i}| i=2, 3, \ldots, k-2; j=1, 2, \ldots, \ell+1\}$, where for every fixed $i \in \{2, 3, \ldots, k-2\}$ and $j \in \{2, 3, \ldots, \ell+1\}$ the point $s^{j}_{i}$ is inside the triangles $s^1_{i-1}s^1_{i}s^1_{i+1}$ and $s_1^1s^1_is^1_{k-1}$. Moreover, depending on whether $k-1$ is even or odd the points in $Z$ satisfy the following properties.
\begin{description}
\item[{\it Case} A:]$k-1=2m$ is even. The set of points $\{s^{j}_{i}| j=2, 3, \ldots, \ell+1\}$ lies on a concave chain $C(s^1_i, s^1_1)$ from $s^1_{i}$ to $s^1_1$, for $i=2, 3, \ldots, m$. Similarly, the set of points $\{s^{j}_{i}| j=2, 3, \ldots, \ell+1\}$ lies on a concave chain $C(s^1_i, s^1_{k-1})$ from $s^1_{i}$ to $s^1_{k-1}$, for $i=m+1, m+2, \ldots, 2m-1~(=k-2)$.

\item[{\it Case} B:]$k-1=2m+1$ is odd. The set of points $\{s^{j}_{i}| j=2, 3, \ldots, \ell+1\}$ lies on a concave chain $C(s^1_i, s^1_1)$ from $s^1_{i}$ to $s^1_1$, for $i=2, 3, \ldots, m$. Similarly, the set of points $\{s^{j}_{i}| j=2, 3, \ldots, \ell+1\}$ lies on a concave chain $C(s^1_i, s^1_{k-1})$ from $s^1_{i}$ to $s^1_{k-1}$, for $i=m+1, m+2, \ldots, 2m~(=k-2)$.
\end{description}

Clearly, $|Z|=(k - 3)(\ell +1)+2$. We shall now show that the set $Z$ constructed above is $\ell$-convex. Consider three distinct points $s^{p}_{i}$, $s^{q}_{j}$, and $s^{r}_{k}$ in $S$. Let $p< q < r$. We will consider three cases:

\begin{description}
\item[{\it Case} 1:]$i=j=k$. Then $\bbI(s^{p}_{i}s^{q}_{j}s^{r}_{k})$ is empty in $Z$.
\item[{\it Case} 2:]$i=j\ne k$. Then the points of $Z$ contained in $\bbI(s^{p}_{i}s^{q}_{j}s^{r}_{k})$
are $s^{p+1}_i, s^{p+2}_i, \ldots, s^{q-1}_i$. Therefore, $|\bbI(s^{p}_{i}s^{q}_{j}s^{r}_{k})\cap S|=q-p-1\leq \ell-1$.
\item[{\it Case} 3:]$i\ne j\ne k$. This implies, the points of $S$ contained in $\bbI(s^{p}_{i}s^{q}_{j}s^{r}_{k})$ are $s^{q+1}_j, s^{q+2}_j, \ldots, s^{\ell+1}_j$. Hence, $|\bbI(s^{p}_{i}s^{q}_{j}s^{r}_{k})\cap S|=\ell-q+1\leq \ell$.
\end{description}

From the above three cases, we conclude that the set $Z$ is $\ell$-convex. It remains to show that it
contains no convex $k$-gon. Let $\mathcal P\subset Z$ be a set of points that form a convex polygon. Let $\mathcal P_i\subset \mathcal P$ be the set of points in $\mathcal P$ which has subscript $i$, for $i\in \{2, 3, \ldots, k-2\}$.

If for all $i\in \{2, 3, \ldots, k-2\}$, $|\mathcal P_i|\leq 1$, then clearly $|\mathcal P|\leq k-1< k$.
Otherwise assume that $|\mathcal P_i|\geq 2$, for at least some $i\in \{2, \ldots, k-2\}$. Note that
because of the orientations of the arrangements of the points in $\mathcal P_i$ along concave chains as described above, there can be at most one subscripts $i$ for which $|\mathcal P_i|\geq 3$. Next, observe that there cannot
be more than 3 subscripts $i$ such that $|\mathcal P_i|\geq 2$, since the set $\mathcal P_i$ is contained in triangles
$s^1_{i-1}s^1_{i}s^1_{i+1}$ and $s_1^1s^1_is^1_{k-1}$. If there are two subscripts $i < j$ such that $|\mathcal P_i|, |\mathcal P_j| \geq 2$, then $\mathcal P \subset \bigcup_{z=i}^j \mathcal P_z$. Therefore, if both $|\mathcal P_i|, |\mathcal P_j|\geq 2$, then none of the points $s^1_{1}$ and $s^1_{k-1}$ can be in $\mathcal P$. Similarly, if $|\mathcal P_i|\geq 2$, then either $\mathcal P \subset \bigcup_{z=i}^{k-1} \mathcal P_z$ or $\mathcal P \subset \bigcup_{z=0}^{i} \mathcal P_z$, and only the point $s^1_{k-1}$ or $s^1_{1}$ can be in $\mathcal P$, respectively.

With these observations, we have the following two cases:

\begin{description}
\item[{\it Case} 1:]Let $|\mathcal P_{i_0}|\geq 3$, for some $i_0$. We  now have the following two cases:
\begin{description}
\item[{\it Case} 1.1:]For all $i\ne i_0$, $|\mathcal P_i|\leq 1$. In this case the largest size of a convex polygon in $Z$ can be obtained by taking all the points in $\mathcal P_{i_0}$, where $i_0=(k-1)/2$ or $i_0=k/2$, depending on whether $k-1$ is even or odd, and one point from each $\mathcal P_i$ on one side of $P_{i_0}$, depending on the curvature of the concave chain at $\mathcal P_{i_0}$. Therefore, the largest possible size of a convex polygon possible is $|\mathcal P|\leq (k-1)/2+\ell$ for $k-1$ even, and $|\mathcal P|\leq k/2+\ell$ for $k-1$ odd. Since $\ell<k/2$, it follows by assumption that $|\mathcal P|< k$.

\item[{\it Case} 1.2:]There exists some $j_0\ne i_0$ such that $|\mathcal P_{j_0}|=2$. In this case the largest size of the convex polygon can be obtained by taking $i_0$ as in {\it Case} 1.1, $j_0=2$ or $j_0=k-2$, and one point each from every $\mathcal P_i$ between $\mathcal P_{i_0}$ and $\mathcal P_{j_0}$. As none of the points $s^1_1$ or $s^1_{k-1}$ can be in $\mathcal P$, it follows that $|\mathcal P|\leq (k-1)/2+\ell$ for $k-1$ even and $|\mathcal P|\leq k/2+\ell$ for $k-1$ odd.
\end{description}

\item[{\it Case} 2:] Let $|\mathcal P_{i_0}|=2$, for some $i_0$, and $\mathcal |P_{j_0}|\leq 2$. If there exits some other $j_0\ne i_0$ such that $|\mathcal P_{j_0}|=2$, then size of a convex polygon that can be found in $Z$ is obtained by taking $i_0=2$ and $j_0=k-2$ (or vice versa) and one point from each $\mathcal P_i$ between $\mathcal P_{i_0}$ and $\mathcal P_{j_0}$. Clearly, the size of the largest convex polygon that can be obtained in this way is $|\mathcal P|\leq k-1$. Otherwise, for all $i\ne i_0$, $|P_{i_0}|=1$, and it is easy to see that $|\mathcal P|\leq k-1$. \hfill $\Box$
\end{description}
\end{proof}

%\begin{theorem}
%$13 \leq F(7)\leq 23$.
%\label{th:f(7)}
%\end{theorem}
%\begin{proof}
%The set of 12 points shown in Figure \ref{fig:seven_example}(c) contains neither a 7-pseudo-triangle nor a convex
%heptagon. Hence, $F(7)\geq 13$.

%Let $S$ be any set of 23 points in the plane in general position. If $|CH(S)|\geq 7$, then $S$ trivially contains a convex heptagon.
%Therefore, let $|CH(S)|=k\leq 6$. $CH(S)$ can be partitioned into $k-2$ triangles whose vertex set is exactly $\bbV(CH(S))$. Since $|\tilde{\bbI}(CH(S)|=23-k$, one of these $k-2$ triangles
%contains at least $\lceil\frac{23-k}{k-2}\rceil$ interior points. As $k\leq 6$, implies $\lceil\frac{23-k}{k-2}\rceil \geq 5$, $S$ contains a triangle with more than 4 interior points and the existence of a 7-pseudo-triangle follows from Theorem \ref{lm:e7}.\hfill $\Box$
%$\end{proof}

Using this lemma, we now obtain the exact values of $F(k, 5)$ and $F(k, 6)$ in the following theorem:

\begin{theorem}For any positive integer $k\geq 3$, we have
\begin{description}
\item[{\it (i)}]$F(k, 5)=2k-3$ for $k\geq 3$.
\item[{\it (ii)}]$F(k, 6)=3k-6$ for $k\geq 3$.
\end{description}
\label{th:f(k,5)f(k,6)}
\end{theorem}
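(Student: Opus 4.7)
The plan is to derive both identities simultaneously by coupling Lemma~\ref{lm:bt} with the triangular-hull existence results of Section~\ref{sec:triangle}. The central idea is a dichotomy on $\ell$-convexity: if the input set is $\ell$-convex, Lemma~\ref{lm:bt} itself yields a convex $k$-gon, while if it fails to be $\ell$-convex then some triangle spanned by three points of $S$ has at least $\ell+1$ interior points, which via Lemmas~\ref{lm:lm1} and~\ref{lm2} produces the required pseudo-triangle. The matching lower bounds will come from the extremal construction already contained in Lemma~\ref{lm:bt}.

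For the upper bound in (i), take any $S$ in general position with $|S| = 2k-3 = (k-3)(1+1)+3$. If $S$ is 1-convex, Lemma~\ref{lm:bt} immediately supplies a convex $k$-gon. Otherwise, there exist $a, b, c \in S$ such that $\triangle abc$ contains at least two points of $S$ in its interior; applying Lemma~\ref{lm:lm1} to $T := \{a,b,c\} \cup (\bbI(abc)\cap S)$, which satisfies $|CH(T)| = 3$ and $|\tilde{\bbI}(CH(T))| \geq 2$, yields an empty 5-pseudo-triangle of $T$, which is in particular a 5-pseudo-triangle of $S$. The upper bound in (ii) is entirely analogous with parameters $\ell = 2$ and $|S| = 3k-6 = (k-3)(2+1)+3$, replacing Lemma~\ref{lm:lm1} by Lemma~\ref{lm2} (which guarantees a 6-pseudo-triangle whenever a triangle has at least three interior points).

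For the lower bounds, Lemma~\ref{lm:bt} provides, for each admissible $(k, \ell)$, an $\ell$-convex set of $(k-3)(\ell+1)+2$ points in general position containing no convex $k$-gon. With $\ell = 1$, the resulting set of $2k-4$ points witnesses $F(k, 5) \geq 2k-3$: any 5-pseudo-triangle has two concave vertices strictly inside the triangle formed by its three convex vertices, so a 5-pseudo-triangle in a 1-convex set would yield a triangle with at least two interior points, contradicting 1-convexity. With $\ell = 2$, the set of $3k-7$ points witnesses $F(k, 6) \geq 3k-6$ by the same argument applied to the three concave vertices of a 6-pseudo-triangle, which would force a triangle with three interior points and violate 2-convexity.

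The main obstacle is managing parameter ranges: the tightness half of Lemma~\ref{lm:bt} requires $\ell < k/2$, which is automatic for $\ell = 1$ and $k \geq 3$, but for $\ell = 2$ only holds once $k \geq 5$, so the boundary values $k = 3, 4$ in part (ii) must be verified by hand (both reduce to elementary consequences of the Erd\H os--Szekeres theorem, since $ES(3) = 3$ and $ES(4) = 5$). A smaller subtlety is that the pseudo-triangles produced by Lemmas~\ref{lm:lm1} and~\ref{lm2} are empty only with respect to the subset $T$ rather than with respect to all of $S$, but this causes no difficulty because $F(k, \ell)$ does not require emptiness.
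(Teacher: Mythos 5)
Your proposal is essentially identical to the paper's own (very terse) proof: both parts follow from Lemma~\ref{lm:bt} via the $\ell$-convexity dichotomy (with $\ell=1$ and $\ell=2$), using Lemma~\ref{lm:lm1} and Lemma~\ref{lm2} to extract the pseudo-triangle from a non-$\ell$-convex set and the tightness construction of Lemma~\ref{lm:bt} for the lower bound, so the approaches coincide. You are in fact more careful than the paper in flagging the $\ell<k/2$ restriction on the tightness half of Lemma~\ref{lm:bt}; be aware, however, that your proposed hand-verification of the boundary case $k=4$ in part (ii) would actually reveal an off-by-one problem in the theorem statement itself, since $F(4,6)\leq ES(4)=5<6=3\cdot 4-6$, so that case cannot be confirmed as stated (this is an issue with the paper, not with your method).
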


\begin{proof}Lemma \ref{lm:lm1} implies that any set which has a triangle with 2 interior points has a 5-pseudo-triangle. Moreover, a 1-convex set cannot contain a 5-pseudo-triangle. Therefore, part (i) follows from
Lemma \ref{lm:bt} by with $\ell=1$.

Similarly, Lemma \ref{lm2} implies that any set which has a triangle with 3 interior points has a 6-pseudo-triangle. Moreover, a 2-convex set cannot contain a 5-pseudo-triangle. Therefore, part (ii) follows from
Lemma \ref{lm:bt} by with $\ell=2$.\hfill $\Box$
\end{proof}

In the following theorem, using Lemma \ref{lm:bt} and the results on 7-pseudo-triangles, we obtain new bounds on $F(k, 7)$.

\begin{theorem}$$F(k, 7)=\left\{
  \begin{array}{ll}
                 3 &\hbox{for $k=3$,}\\
                 5 &\hbox{for $k=4$,}\\
                 9 &\hbox{for $k=5$,}\\
                 \left[16, 17\right] &\hbox{for $k=6$,}\\
                 \left[21, 23\right] &\hbox{for $k=7$,}\\
                \left[4k-9, 5k-12\right] &\hbox{for $k\geq 8$.}
               \end{array}
\right.$$
\label{th:f(k,7)}
\end{theorem}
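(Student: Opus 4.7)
My strategy is to combine Lemma \ref{lm:bt} (for the lower bounds) with a pigeonhole argument plus Theorem \ref{lm:e7} (for the upper bounds), handling the small and refined cases separately. The small cases $k \in \{3, 4, 5\}$ are immediate from $F(k, 7) \leq ES(k)$ together with the classical $(ES(k){-}1)$-point Erd\H os constructions, which one verifies also avoid 7-pseudo-triangles (for $k=5$, the 8-point configuration of Figure \ref{fig:example_lower}(b) already serves this purpose).

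For the main range $k \geq 7$, the lower bound $F(k, 7) \geq 4k - 9$ comes from Lemma \ref{lm:bt} applied with $\ell = 3$: the hypothesis $\ell < k/2$ holds precisely because $k \geq 7$, so the lemma produces a 3-convex set of $4k - 10$ points in general position with no convex $k$-gon. Any 7-pseudo-triangle would force its four concave vertices to lie inside the triangle spanned by its three convex vertices, giving that triangle at least four interior points and contradicting 3-convexity; hence the set also contains no 7-pseudo-triangle. For the upper bound $F(k, 7) \leq 5k - 12$, I will take $S$ with $|S| = 5k - 12$: if $|CH(S)| \geq k$ we have a convex $k$-gon, and otherwise with $|CH(S)| = h \leq k - 1$ there are at least $4k - 11$ interior points distributed across a fan triangulation of $CH(S)$ into $h - 2 \leq k - 3$ triangles. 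Pigeonhole yields a triangle with at least $\lceil (4k-11)/(k-3) \rceil = 5$ interior points of $S$, and Theorem \ref{lm:e7} applied to these $5$ interior points together with the three triangle corners produces a 7-pseudo-triangle.

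For $k = 7$ the upper bound $F(7, 7) \leq 23$ is exactly the general formula, while the refined lower bound $F(7, 7) \geq 21$ is quoted from the order-type enumeration of Aichholzer et al.~\cite{toth}. For $k = 6$ the general argument gives only $[15, 18]$, so both endpoints must be sharpened. The upper bound $F(6, 7) \leq 17$ handles the subcases $|CH(S)| \in \{3, 4\}$ by the same pigeonhole plus Theorem \ref{lm:e7} combination, and reduces to the single remaining case $|CH(S)| = 5$ with $|S| = 17$ and exactly 12 interior points, where any single triangulation of the pentagon yields only 4 interior points per triangle. Here a finer analysis over all ten vertex-triples $\triangle s_i s_j s_k$ of the pentagon (using two or more triangulations and, if needed, a splitter near an interior point) is required to locate a triangle with at least 5 interior points of $S$. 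The refined lower bound $F(6, 7) \geq 16$ is established by exhibiting a specific 15-point configuration in general position that avoids both a convex hexagon and any 7-pseudo-triangle.

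The principal obstacle is the case $|CH(S)| = 5$, $|S| = 17$ in the proof of $F(6, 7) \leq 17$: the straightforward pigeonhole over a three-triangle triangulation fails by exactly one interior point, so forcing a sub-triangle with $5$ interior points needs a case analysis that exploits the structure of all $\binom{5}{3} = 10$ vertex-triples of the pentagon, rather than the three chosen by any single triangulation. The explicit 15-point construction for $F(6, 7) \geq 16$ is the secondary obstacle, since for $k = 6$ and $\ell = 3$ Lemma \ref{lm:bt} does not apply directly and one must modify the Bisztriczky--Fejes T\'oth pattern to preserve general position while still blocking every 7-pseudo-triangle.
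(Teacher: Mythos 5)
Your handling of the main range is essentially the paper's argument. For $k\ge 7$ the lower bound via the tightness part of Lemma \ref{lm:bt} with $\ell=3$ (where $\ell<k/2$ is exactly the condition $k\ge 7$), combined with the observation that a 3-convex set contains no 7-pseudo-triangle, matches the paper. For the upper bound $5k-12$ you unfold an explicit pigeonhole over a fan triangulation ($\lceil(4k-11)/(k-3)\rceil=5$, then Theorem \ref{lm:e7}) where the paper instead cites the first half of Lemma \ref{lm:bt} with $\ell=4$ applied to the 4-convex case; these are the same computation and either is fine. The $k=7$ bounds and the quoted lower bound of 21 also agree with the paper, and leaving the 15-point configuration for $F(6,7)\ge 16$ as an explicit construction to be exhibited is consistent with the paper's own level of detail (it simply points to Figure \ref{f(6,l)examples}(a)).

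The genuine gap is your upper bound for $k=6$. You propose to prove $F(6,7)\le 17$ by forcing a triangle with at least 5 interior points, correctly observe that for $|CH(S)|=5$ and $|S|=17$ the pigeonhole only yields 4 points per triangle, and then defer to an unspecified ``finer analysis over all ten vertex-triples.'' This cannot be completed as described: a 17-point set in general position with pentagonal hull can be 4-convex (for instance, a general-position perturbation of the Bisztriczky--Fejes T\'oth pattern for $k=6$, $\ell=4$, whose $\ell$-convexity analysis in Lemma \ref{lm:bt} does not use the hypothesis $\ell<k/2$), and in such a set no triangle whatsoever has 5 interior points, so Theorem \ref{lm:e7} is unavailable and your argument detects neither a hexagon nor a 7-pseudo-triangle. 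The theorem is nevertheless true for a reason you never invoke: $F(6,7)\le ES(6)=17$ by the Szekeres--Peters result, since any 17 points in general position already contain a convex hexagon. That one line is the paper's entire upper-bound argument for $k=6$, and it is the necessary replacement for your incomplete case analysis.
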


\begin{proof}Using the fact that $ES(4)=5$ and $ES(5)=9$, it is easy to obtain \
$F(4, 7)=5$ and $F(5, 7)=9$, respectively. For $k=6$ we slightly modify the construction in Lemma \ref{lm:bt} to obtain a set of 15 points, shown in Figure \ref{f(6,l)examples}(a) which contains no 6-gon or 7-pseudo-triangle. This example and the fact that $ES(6)=17$ \cite{szekeres}, implies $16 \leq F(6, 7)\leq 17$.

Theorem \ref{lm:e7} implies that any triangle with 5 or more points in its interior contains a 7-pseudo-triangle.
Lemma \ref{lm:bt} with $\ell =4$ implies that any 4-convex set of $5k-12$ points contains a $k$-hole, thus proving that $F(k, 7)\leq 5k-12$. Moreover, any 3-convex point set cannot contain a 7-pseudo-triangle. The lower bound on $F(k, 7)$ now follows from the tightness part of Lemma \ref{lm:bt}, with $\ell=3$ and $k\geq 7$. Therefore, for $k\geq 7$ we have $4k-9\leq F(k, 7) \leq 5k-12$.

For $k=7$, the above inequalities give $19\leq F(7, 7)\leq 23$. As mentioned earlier, the improved lower bound of 21 on $F(7, 7)$ follows from a claim of Aichholzer et al. \cite{aichholzer_survey}. \hfill $\Box$
\end{proof}

%%%%%%%%%%%%%%%%%%%%%%%%%%%%%%%%%%%%%%%%%%%%%%%%%%%%%%%%%%%%%%%%%%%%%%%%%%%%%%%%
\begin{figure*}[h]
\centering
\begin{minipage}[c]{0.5\textwidth}
\centering
\includegraphics[width=2.3in]
    {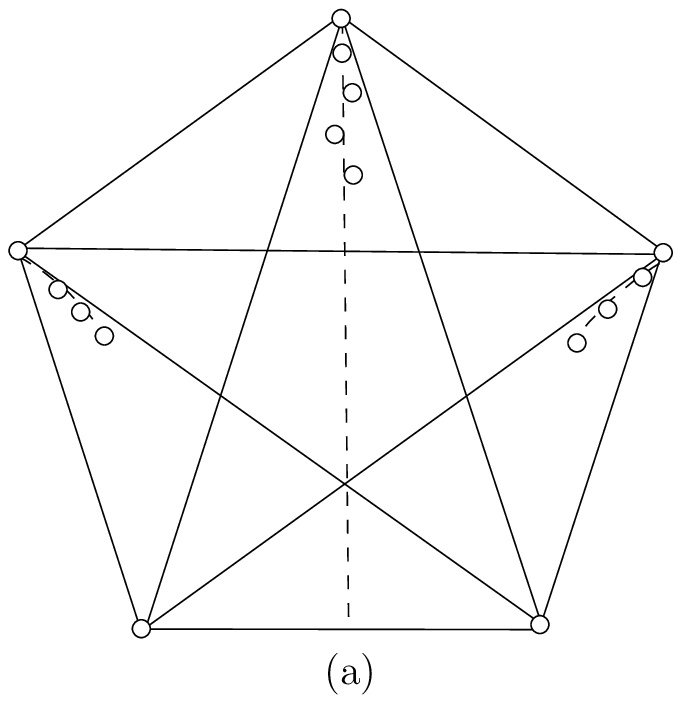}\\
%{\small (a) Forbidden zone of a pentagon $P$.}\\
\end{minipage}%
\begin{minipage}[c]{0.5\textwidth}
\centering
\includegraphics[width=2.3in]
   {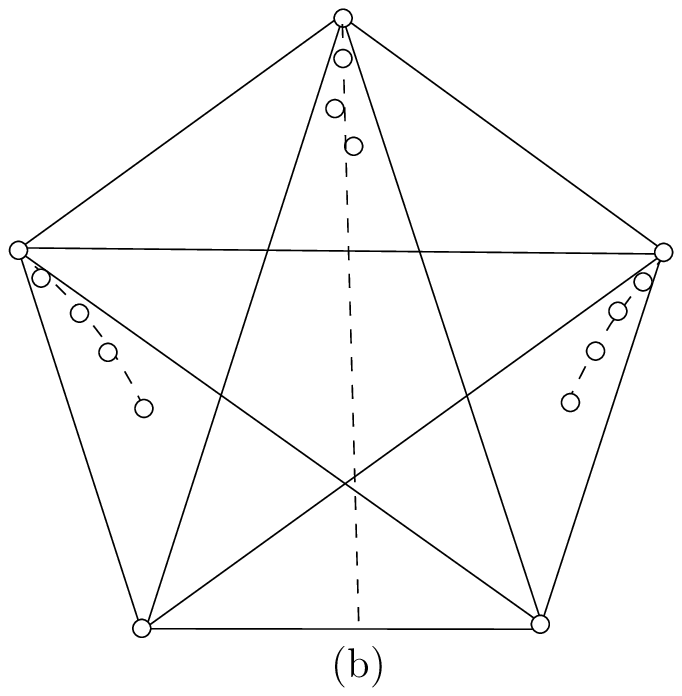}\\
\end{minipage}
\caption{(a) A set of 15 points not containing a 6-gon or a 7-pseudo-triangle, (b) A set of 16 points not containing a 6-gon or an $\ell$-pseudo-triangle for $\ell\geq 8$.}
\label{f(6,l)examples}
\end{figure*}
%%%%%%%%%%%%%%%%%%%%%%%%%%%%%%%%%%%%%%%%%%%%%%%%%%%%%%%%%%%%%%%%%%%%%%%%%%%%%%%%%%%%%%%%%%%%%%%%%%%%%%%%%%

\noindent{\it Remark} 3: The set of 16 points shown in Figure \ref{f(6,l)examples}(b) is clearly 4-convex. This implies that it cannot contain any $\ell$-pseudo-triangle, for $\ell \geq 8$. Moreover, arguing as in Lemma \ref{lm:bt}, it is easy to see that it contains no convex 6-gon. Since $ES(6)=17$, we have $F(6, \ell)=17$, for $\ell \geq 8$.\\

\noindent{\it Remark} 4: Since an $\ell$-convex point set cannot not contain any $(\ell+4)$ pseudo-triangle, it follows from Lemma \ref{lm:bt} that $F(k, \ell+4)\geq (k-3)(\ell+1)+3$, whenever $\ell< k/2$.\\

The bounds obtained on the values $F(k, 5), F(5, \ell), F(k, 6), F(6, \ell)$, and $F(k, 7)$ for different values of $k$ and $\ell$ are summarized in Table \ref{table:f(k,l)}.

\begin{table}[h]
\caption{Summary of the results}
\centering
%\vspace{-0.1in}
\begin{tabular}{c}
\hline
$F(k, 5)=2k-3$\\
\hline
\hline
$F(5, \ell)=\left\{
  \begin{array}{ll}
                 3 &\hbox{for $\ell=3$,}\\
                 4 &\hbox{for $\ell=4$,}\\
                 7 &\hbox{for $\ell=5$,}\\
                 9 &\hbox{for $\ell\geq 6$.}
               \end{array}
\right.$\\
\hline
\hline
$F(k, 6)=3k-6$\\
\hline
\hline
$F(6, \ell)=\left\{
  \begin{array}{ll}
                 3 &\hbox{for $\ell=3$,}\\
                 4 &\hbox{for $\ell=4$,}\\
                 7 &\hbox{for $\ell=5$,}\\
                 12 &\hbox{for $\ell=6$,}\\
                 \left[16, 17\right] &\hbox{for $\ell=7$,}\\
                 17 &\hbox{for $\ell\geq 8$.}
               \end{array}
\right.$\\
\hline
\hline
$F(k, 7)=\left\{
  \begin{array}{ll}
                 3 &\hbox{for $k=3$,}\\
                 5 &\hbox{for $k=4$,}\\
                 9 &\hbox{for $k=5$,}\\
                 \left[16, 17\right] &\hbox{for $k=6$,}\\
                 \left[21, 23\right] &\hbox{for $k=7$,}\\
                \left[4k-9, 5k-12\right] &\hbox{for $k\geq 8$.}
               \end{array}
\right.$\\
\hline
\end{tabular}
\label{table:f(k,l)}
\end{table}

\section{Conclusions}
\label{conclusion}
\vspace{-0.1in}

In this paper we have introduced the quantity $E(k, \ell)$, which denotes the smallest integer such that any set of at least $E(k, \ell)$ points in the plane, no three on a line, contains either an empty convex polygon with $k$ vertices or an empty pseudo-triangle with $\ell$ vertices. The existence of $E(k, \ell)$ for positive integers $k, \ell\geq 3$, is the consequence of a result proved by Valtr \cite{valtropencupscaps}. However, the general upper bound on $E(k, \ell)$ is double-exponential in $k+\ell$. In this paper we prove a series of results regarding the existence
of empty pseudo-triangles in point sets with triangular convex hulls. Using them we determine the exact values of $E(k, 5)$ and $E(5, \ell)$, and prove improved bounds on $E(k, 6)$ and $E(6, \ell)$, for $k, \ell\geq 3$. In particular, we show that $12\leq E(6, 6)\leq 18$ and conjecture the lower bound is, in fact, an equality. Verifying this conjecture and improving the bounds on $E(6, \ell)$, for $\ell\geq 7$ are interesting problems. Proving the existence of $E(k, \ell)$, for $k, \ell \geq 3$, without using Valtr's result \cite{valtropencupscaps}, to obtain a better upper bound in general is also worth investigating.

We have also introduced the quantity $F(k, \ell)$, which is the smallest integer such that any set of at least $F(k, \ell)$ points in the plane, no three on a line, contains a convex polygon with $k$ vertices or an $\ell$-pseudo-triangle. We extend a result of Bisztriczky and T\'oth \cite{fejestoth} and obtain the exact values of $F(k, 5)$ and $F(k, 6)$, and prove bounds on $F(k, 7)$. Obtaining exact values of $F(k, 7)$ for $k\geq 6$, and better general upper bounds on $F(k, \ell)$ for $k, \ell \geq 3$ remains open.\\

\small{\noindent{{\it Acknowledgement:}} The authors wish to thank Bettina Speckmann for her insightful comments on the various properties of pseudo-triangles which led to simplified proofs of some of the results. The authors are indebted to an anonymous referee for valuable comments which has improved both the quality and the presentation of the paper. The authors also thank Tibor Bisztriczky for providing them copies of some of his papers.}

% do the bibliography:
\bibliographystyle{IEEEbib}
\bibliography{strings,refs,manuals}

\end{document}